\documentclass[leqno,12pt]{article} 
\setlength{\textheight}{23cm} \setlength{\textwidth}{16cm}
\setlength{\oddsidemargin}{0cm} \setlength{\evensidemargin}{0cm}
\setlength{\topmargin}{0cm}
\usepackage{amsmath, amssymb}
\usepackage{amsthm} 
\usepackage{pictexwd,dcpic}
\usepackage{xfrac}

%
%
%
\theoremstyle{plain} 
\newtheorem{theorem}{\indent\sc Theorem}[section]
\newtheorem{lemma}[theorem]{\indent\sc Lemma}
\newtheorem{corollary}[theorem]{\indent\sc Corollary}
\newtheorem{proposition}[theorem]{\indent\sc Proposition}

\theoremstyle{definition} 

\newtheorem{remark}[theorem]{\indent\sc Remark}

\newtheorem{notation}[theorem]{\indent\sc Notation}

%

%
\numberwithin{equation}{section}

\def\Z{\zeta}
\def\V{\varphi}
\def\O{\omega}
\def\E{\eta}
\def\G{\Gamma}
\def\Q{\mathbb{Q}}
\def\T{\Theta}

\def\X{\xi}

\makeatletter
\def\address#1#2{\begingroup
\noindent\parbox[t]{7.8cm}{%
\small{\scshape\ignorespaces#1}\par\vskip1ex
\noindent\small{\itshape E-mail address}%
\/: #2\par\vskip4ex}\hfill%
\endgroup}%

\def\Ddots{\mathinner{\mkern1mu\raise\p@
 \vbox{\kern7\p@\hbox{.}}\mkern2mu
 \raise4\p@\hbox{.}\mkern2mu\raise7\p@\hbox{.}\mkern1mu}}

\def\bbordermatrix#1{\begingroup \m@th
  \@tempdima 4.75\p@
  \setbox\z@\vbox{%
    \def\cr{\crcr\noalign{\kern2\p@\global\let\cr\endline}}%
    \ialign{$##$\hfil\kern2\p@\kern\@tempdima&\thinspace\hfil$##$\hfil
      &&\quad\hfil$##$\hfil\crcr
      \omit\strut\hfil\crcr\noalign{\kern-\baselineskip}%
      #1\crcr\omit\strut\cr}}%
  \setbox\tw@\vbox{\unvcopy\z@\global\setbox\@ne\lastbox}%
  \setbox\tw@\hbox{\unhbox\@ne\unskip\global\setbox\@ne\lastbox}%
  \setbox\tw@\hbox{$\kern\wd\@ne\kern-\@tempdima\left[\kern-\wd\@ne
    \global\setbox\@ne\vbox{\box\@ne\kern2\p@}%
    \vcenter{\kern-\ht\@ne\unvbox\z@\kern-\baselineskip}\,\right]$}%
  \null\;\vbox{\kern\ht\@ne\box\tw@}\endgroup}

\makeatother
%
\title{{Construction of class fields over cyclotomic fields}} 
\author{
\textsc{Ja Kyung Koo and Dong Sung Yoon} 
}

\date{} 
%

\begin{document}

\maketitle

\footnote{ 
2010 \textit{Mathematics Subject Classification}. 11R37, 11F46 (primary), 11G15, 14H42 (secondary). }
\footnote{ 
\textit{Key words and phrases}. class field theory, Siegel modular forms, complex multiplication,  theta functions    } 
\footnote{\thanks{
The second named author was supported by the National Institute for Mathematical Sciences, Republic of Korea.} }

\begin{abstract}
Let $\ell$ and $p$ be odd primes. For a positive integer $\mu$ let $k_\mu$ be the ray class field of $k=\mathbb{Q}(e^{2\pi i/\ell})$ modulo $2p^\mu$.
We present certain class fields $K_\mu$ of $k$ such that $k_\mu\subset K_\mu\subset k_{\mu+1}$, and provide a necessary and sufficient condition for $K_\mu=k_{\mu+1}$.
And we also construct, in the sense of Hilbert, primitive generators of the field $K_\mu$ over $k_\mu$ by using Shimura's reciprocity law and special values of theta constants. 
\end{abstract}

\maketitle

\section{Introduction}

In his 12th problem (1900 Paris ICM) Hilbert asked that what kind of analytic functions and algebraic numbers are necessary to construct all abelian extensions of given number fields.
For any number field $K$ and a modulus $\mathfrak{m}$ of $K$, it is well known (\cite{Takagi} or \cite[Theorem 8.6]{Cox}) that there is a unique maximal abelian extension of $K$ unramified outside $\mathfrak{m}$, which is called the \textit{ray class field} of $K$ modulo $\mathfrak{m}$.
Hence, as a first step toward the problem we need to construct ray class fields for given number fields.
Historically, over imaginary quadratic number fields $K$, Hasse(\cite{Hasse}) constructed the ray class field of $K$ by making use of the Weber function and the elliptic modular function.
And, after Hasse many people investigated in this theme, for example, see \cite{Cais}, \cite{Chen}, \cite{Cho}, \cite{Cougnard}, \cite{Cougnard2}, \cite{Cox}, \cite{Cox2}, \cite{Eichler}, \cite{Jung}, \cite{Ramachandra}, \cite{Schertz}, \cite{Schertz2}, \cite{Schertz3} and \cite{Stevenhagen}.
On the other hand, over any other CM-fields $K$ with $[K:\mathbb{Q}]>2$, not much seems to be known so far.
For instance, over a cyclotomic number field $K$ with odd relative class number, Shimura(\cite{Shimura5}) showed that the Hilbert class field of $K$ is generated by that of the maximal real subfield of $K$ and the unramified abelian extensions of $K$ obtained by the fields of moduli of certain two polarized abelian varieties having subfields of $K$ as endomorphism algebras.
And, by making use of Galois representation Ribet(\cite{Ribet}) constructed unramified abelian, degree $p$-extensions of $K=\mathbb{Q}(e^{2\pi i/p})$ for all irregular primes $p$. 
See, also \cite{Mazur}.
Furthermore, Komatsu(\cite{Komatsu}) investigated a certain class field of $K=\mathbb{Q}(e^{2\pi i/5})$  and constructed its normal basis by means of Siegel modular functions.
\par
Now, let $n$ be a positive integer, $k$ be a CM-field with $[k:\mathbb{Q}]=2n$, $k^*$ be its reflex field and $z_0$ be the associated CM-point (\S \ref{Shimura's reciprocity law}).
Shimura showed in \cite{Shimura2} that if $f$ is a Siegel modular function which is finite at $z_0$, then the special value $f(z_0)$ belongs to some abelian extension (= class field) of $k^*$. 
And, his reciprocity law explains Galois actions on $f(z_0)$ in terms of action of the group $G_{\mathbb{A}+}$ on $f$ (Proposition \ref{reciprocity}). 
Here $G_{\mathbb{A}+}=\prod_p'\textrm{GSp}_{2n}(\mathbb{Q}_p)\times \textrm{GSp}_{2n}^+(\mathbb{R})$ is the restricted product with respect to the subgroups $\textrm{GSp}_{2n}(\mathbb{Z}_p)$ of $\textrm{GSp}_{2n}(\mathbb{Q}_p)$.
He also constructed in \cite{Shimura1} Siegel modular functions by the quotient of two theta constants 
\begin{equation*}
\Phi_{(r,s)}(z)=\frac{\sum_{x\in \mathbb{Z}^n}e\Big(\frac{1}{2}~{^t}(x+r)z(x+r)+{^t}(x+r)s\Big)}{\sum_{x\in \mathbb{Z}^n}e\Big(\frac{1}{2}~{^t}xzx\Big)}
\end{equation*}
for $r,s\in\mathbb{Q}^n$, and explicitly describe the Galois actions on the special values of theta functions 
(\S \ref{Theta functions}). 
\par
In this paper, we mainly consider the case of cyclotomic number field $k=\mathbb{Q}(e^{2\pi i/\ell})$ for any odd prime $\ell$.
Let $p$ be an odd prime and $\mu$ be a positive integer. 
We denote by $k_\mu$ the ray class field of $k$ modulo $2p^\mu$.
In Section \ref{Class fields over cyclotomic fields}, we define the class field $K_\mu$ of $k$ such that $k_\mu\subset K_\mu\subset k_{\mu+1}$, which would be an extension of Komatsu's result (\cite{Komatsu}). 
We shall first find the exact degree of $K_\mu$ over $k_\mu$ for any odd prime $\ell$ (Theorem \ref{dimension}).
And, we shall further provide a necessary and sufficient condition for $K_\mu$ to be the ray class field $k_{\mu+1}$ (Corollary \ref{unit2}).
In Section \ref{Construction of class fields}, as Hilbert proposed, by using Shimura's reciprocity law we shall construct a primitive generator of $K_\mu/k_\mu$ in terms of special value of $\Phi_{(r,s)}(z)$ for some $r,s\in \mathbb{Q}^n$ at the CM-point corresponding to the polarized abelian variety of genus $n=(\ell-1)/{2}$ (Theorem \ref{generator}).

\begin{notation}
For $z\in\mathbb{C}$, we denote by $\overline{z}$ the complex conjugate of $z$ and by Im$(z)$ the imaginary part of $z$, and put $e(z)=e^{2\pi i z}$.
If $R$ is a ring with identity and $r,s\in\mathbb{Z}_{>0}$, $M_{r\times s}(R)$ indicates the ring of all $r\times s$ matrices with entries in $R$. 
In particular, we set $M_{r}(R)=M_{r\times r}(R)$.
The identity matrix of $M_{r}(R)$ is written by $1_r$ and the transpose of a matrix $\alpha$ is denoted by ${^t}\alpha$. And, $R^\times$ stands for the group of all invertible elements of $R$.
If $G$ is a group and $g_1,g_2,\ldots,g_r$ are elements of $G$, let $\langle g_1,g_2,\ldots,g_r \rangle$ be the subgroup of $G$ generated by $g_1,g_2,\ldots,g_r$, and $G^n$ be the subgroup $\{g^n~|~g\in G \}$ of G for $n\in\mathbb{Z}_{>0}$.
Moreover, if $H$ is a subgroup of $G$, let $|G:H|$ be the index of $H$ in $G$.
For a finite algebraic extension $K$ over $F$, $[K:F]$ denotes the degree of $K$ over $F$.
We let $\Z_N=e^{2\pi i/N}$ be a primitive $N$th root of unity for a positive integer $N$.
\end{notation}
\par

\section{Siegel modular forms}
We shall briefly present necessary facts about Siegel modular forms and explain the action of $G_{\mathbb{A}+}$ on the Siegel modular functions whose Fourier coefficients lie in some cyclotomic fields.
\par

Let $n$ be a positive integer and $G$ be the algebraic subgroup of $\mathrm{GL}_{2n}$ defined over $\mathbb{Q}$ such that
\begin{equation*}
G_\mathbb{Q}=\big\{\alpha\in \mathrm{GL}_{2n}(\mathbb{Q})~|~{^t}\alpha J\alpha=\nu(\alpha)J ~\textrm{ with $\nu(\alpha)\in\mathbb{Q}^\times$}\big\},
\end{equation*}
where
\begin{equation*}
J=J_n=\left[
\begin{matrix}
0&-1_n\\
1_n&0
\end{matrix}\right].
\end{equation*}
Let $G_{\mathbb{A}}$ be the adelization of $G$, $G_0$ the non-archimedean part of $G_\mathbb{A}$, and $G_\infty$ the archimedean part of $G_\mathbb{A}$. 
We extend the multiplier map $\nu:G_\mathbb{Q}\rightarrow \mathbb{Q}^\times$ to a continuous map of $G_\mathbb{A}$ into $\mathbb{Q}_\mathbb{A}^\times$, which we denote again by $\nu$. Then we put $G_{\infty+}=\{x\in G_\infty~|~\nu(x)\gg 0\}$ and $G_{\mathbb{A}+}=G_0G_{\infty+}$. 
Here $t\gg 0$ means $t_v>0$ for all archimedean primes $v$ of $\mathbb{Q}$.
For a positive integer $N$, let 
\begin{equation*}
\begin{array}{ccl}
R_N&=&\mathbb{Q}^\times\cdot\{a\in G_{\mathbb{A}+}~|~ a_q\in \mathrm{GL}_{2n}(\mathbb{Z}_q), a_q\equiv 1_{2n}~ (\bmod~ N\cdot M_{2n}(\mathbb{Z}_q))\textrm{ for all primes $q$}\},\vspace{0.1cm}\\
\Delta&=&\displaystyle\bigg\{
\left[\begin{matrix}
1_n&0\\
0&x\cdot 1_n
\end{matrix}\right]
~|~x\in\prod_q \mathbb{Z}_q^\times\bigg\},\vspace{0.1cm}\\
G_{\Q+}&=&\{\alpha\in G_\Q~|~\nu(\alpha)>0 \}.
\end{array}
\end{equation*}
\begin{proposition}
For every positive integer $N$, we have
\begin{equation*}
G_{\mathbb{A}+}=R_N\Delta G_{\mathbb{Q}+}.
\end{equation*}
\end{proposition}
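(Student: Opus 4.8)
The plan is to descend through the short exact sequence $1\to Sp_{2n}\to G\xrightarrow{\nu}\mathbb{G}_m\to 1$ of algebraic groups over $\mathbb{Q}$, so that the decomposition reduces to two standard inputs: strong approximation for the simply connected group $Sp_{2n}$ (whose archimedean points are non-compact), and the fact that $\mathbb{Q}$ has class number $1$, which gives the idele factorization $\mathbb{Q}_{\mathbb{A}}^\times=\mathbb{Q}_{>0}\cdot\big(\mathbb{R}_{>0}\times\prod_q\mathbb{Z}_q^\times\big)$ on the positive part. The reverse inclusion $R_N\vartriangle G_{\mathbb{Q}+}\subseteq G_{\mathbb{A}+}$ being immediate, only $G_{\mathbb{A}+}\subseteq R_N\vartriangle G_{\mathbb{Q}+}$ needs argument.

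First I would record the multiplier images of the three factors. A direct computation with $J$ gives $\nu\big(\mathrm{diag}(1_n,x\cdot 1_n)\big)=x$, so $\nu(\vartriangle)=\prod_q\mathbb{Z}_q^\times$ (trivial at $\infty$), while $\nu(G_{\mathbb{Q}+})=\mathbb{Q}_{>0}$ sits diagonally, and $\nu(R_N)\supseteq\mathbb{R}_{>0}$ because $G_{\infty+}\subseteq R_N$ (an element supported only at $\infty$ satisfies the finite-place conditions vacuously). Next I would check that $\vartriangle$ normalizes $R_N$: writing an element of $R_N$ as a central scalar from $\mathbb{Q}^\times$ times an element of $K_N=G_{\infty+}\times\prod_q\{a_q\in GL_{2n}(\mathbb{Z}_q):a_q\equiv 1_{2n}\ (\mathrm{mod}\ N)\}$, conjugation by $\delta\in\vartriangle$ leaves the scalar fixed and preserves $K_N$, since each local principal congruence subgroup is normal in $GL_{2n}(\mathbb{Z}_q)\cap G_{\mathbb{Q}_q}$ and $\delta$ has trivial archimedean part. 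Hence $H_N:=R_N\vartriangle=\vartriangle R_N$ is a subgroup of $G_{\mathbb{A}+}$, and the goal becomes $G_{\mathbb{A}+}=H_N\,G_{\mathbb{Q}+}$.

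Now fix $x\in G_{\mathbb{A}+}$, so that $\nu(x)$ is an idele positive at $\infty$. Using class number $1$ I would factor $\nu(x)=c\cdot u$ with $c\in\mathbb{Q}_{>0}$ and $u=u_\infty u_f\in\mathbb{R}_{>0}\times\prod_q\mathbb{Z}_q^\times$, and set $\gamma_0=\mathrm{diag}(1_n,c\cdot 1_n)\in G_{\mathbb{Q}+}$ together with the adele $h_1\in\vartriangle\,G_{\infty+}\subseteq H_N$ whose finite part is $\mathrm{diag}(1_n,u_f 1_n)$ and whose archimedean part is $\mathrm{diag}(1_n,u_\infty 1_n)$. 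Then $\nu(h_1\gamma_0)=u\,c=\nu(x)$, so $s:=h_1^{-1}x\gamma_0^{-1}$ lies in $\ker\nu=Sp_{2n}(\mathbb{A})$. Applying strong approximation to the open subgroup $W=Sp_{2n}(\mathbb{R})\times\prod_q\{g\in Sp_{2n}(\mathbb{Z}_q):g\equiv 1_{2n}\ (\mathrm{mod}\ N)\}\subseteq R_N$ yields $Sp_{2n}(\mathbb{A})=(R_N\cap Sp_{2n}(\mathbb{A}))\cdot Sp_{2n}(\mathbb{Q})$, whence $s=\sigma\tau$ with $\sigma\in R_N\cap Sp_{2n}(\mathbb{A})$ and $\tau\in Sp_{2n}(\mathbb{Q})$. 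Reassembling, $x=h_1 s\gamma_0=(h_1\sigma)(\tau\gamma_0)$, where $h_1\sigma\in H_N$ (here I use that $H_N$ is a group) and $\tau\gamma_0\in G_{\mathbb{Q}+}$, so $x\in H_N G_{\mathbb{Q}+}=R_N\vartriangle G_{\mathbb{Q}+}$, as required.

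The only genuinely non-formal ingredient is strong approximation for $Sp_{2n}$; the multiplier reduction is pure bookkeeping enabled by the trivial class number of $\mathbb{Q}$. The step I expect to demand the most care is the assembly, namely producing the factors in the prescribed order $R_N,\vartriangle,G_{\mathbb{Q}+}$: this is exactly what forces the normalization check that $\vartriangle$ stabilizes $R_N$ under conjugation, so that $R_N\vartriangle$ is closed under multiplication and the two $R_N$-contributions (from $h_1$ and from $\sigma$) can be merged while the two rational contributions are merged on the right.
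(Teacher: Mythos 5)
Your proof is correct, but it cannot be compared line-by-line with the paper's, because the paper gives no argument at all: its ``proof'' consists of the citations \cite[Proposition 3.4]{Shimura3} and \cite[p.535 (3.10.3)]{Shimura4}. What you have produced is a self-contained reconstruction of the standard argument underlying those references. Concretely, you reduce along the exact sequence $1\to Sp_{2n}\to G\xrightarrow{\ \nu\ }\mathbb{G}_m\to 1$: the multiplier is handled by factoring any idele of $\mathbb{Q}$ that is positive at infinity as a positive rational times an element of $\mathbb{R}_{>0}\times\prod_q\mathbb{Z}_q^\times$ (this is where class number one of $\mathbb{Q}$ and $\mathbb{Z}^\times=\{\pm 1\}$ enter), and the kernel is absorbed via strong approximation for the simply connected, $\mathbb{R}$-isotropic group $Sp_{2n}$, applied to the open subgroup $Sp_{2n}(\mathbb{R})\times\prod_q\{g\in Sp_{2n}(\mathbb{Z}_q)~:~g\equiv 1_{2n}\hspace{-0.5em}\pmod{N}\}$ contained in $R_N$. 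The auxiliary verifications all check out: $\nu\big(\mathrm{diag}(1_n,x\cdot 1_n)\big)=x$, $G_{\infty+}\subseteq R_N$, and---the step that genuinely needs care---$\vartriangle$ normalizes $R_N$ (centrality of the $\mathbb{Q}^\times$ factor, normality of the local principal congruence subgroup in $GL_{2n}(\mathbb{Z}_q)\cap G_{\mathbb{Q}_q}$, triviality of $\vartriangle$ at the archimedean place), so that $R_N\vartriangle$ is a group and the two $R_N$-contributions arising from your construction can legitimately be merged while keeping the factors in the prescribed order. As for what each route buys: the paper's citation is shorter and inherits the greater generality of Shimura's canonical-models papers, where such decompositions are proved for a wider class of groups; your argument is self-contained modulo strong approximation for $Sp_{2n}$, which is a tool the paper invokes later anyway (in the proof of Lemma \ref{phi-class} and the discussion following it), so you have introduced no black box beyond what the paper already assumes.
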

\begin{proof}
\cite[Proposition 3.4]{Shimura3} and \cite[p.535 (3.10.3)]{Shimura4}.
\end{proof}

Let $\mathbb{H}_n=\{z\in M_n(\mathbb{C})~|~{^t}z=z,~\mathrm{Im}(z)>0\}$ be the Siegel upper half-space of degree $n$.
Here, for a hermitian matrix $\xi$ we write $\xi>0$ to mean that $\xi$ is positive definite.
We define the action of an element $\alpha=
\left[\begin{matrix}
A&B\\
C&D
\end{matrix}\right]$
of $G_{\Q+}$ on $\mathbb{H}_n$ by
\begin{equation*}
\alpha(z)=(Az+B)(Cz+D)^{-1},
\end{equation*}
where $A,B,C,D\in M_n(\mathbb{Q})$.
For every positive integer $N$, let 
\begin{equation*}
\G(N)=\big\{\gamma \in \mathrm{Sp}_{2n}(\mathbb{Z})~|~\gamma\equiv 1_{2n}\pmod{N\cdot M_{2n}(\mathbb{Z})} \big\}.
\end{equation*} 
For an integer $m$, a holomorphic function 
$f:\mathbb{H}_n\rightarrow\mathbb{C}$ is called a (classical) \textit{Siegel modular form of weight $m$ and level $N$} if
\begin{eqnarray}\label{modularity}
f(\gamma (z))=\det(Cz+D)^m f(z) 
\end{eqnarray}
for all $\gamma=\left[\begin{matrix}A&B\\C&D\end{matrix}\right] \in \G (N)$ and $z\in \mathbb{H}_n$, plus  the requirement when $n=1$ that $f$ is holomorphic at every cusp. 
In particular, $f(z)$ has a Fourier expansion of the form
\begin{equation*}
f(z)=\sum_{\xi}A(\xi)e(tr(\xi z)/N)
\end{equation*}
with $A(\xi)\in \mathbb{C}$, where $\xi$ runs over all positive semi-definite half-integral symmetric matrices of degree $n$ \cite[\S4 Theorem 1]{Klingen}. 
Here, a symmetric matrix $\xi \in M_n(\Q)$ is called half-integral if $2\xi$ is an integral matrix whose diagonal entries are even.
\par
For a subring $R$ of $\mathbb{C}$, let $\mathcal{M}_m\big(\G(N),R\big)$ be the vector space of all Siegel modular forms $f$ of weight $m$ and level $N$ whose Fourier coefficients $A(\xi)$ belong to $R$ and let $\mathcal{M}_m(R)=\displaystyle\bigcup_{N=1}^\infty \mathcal{M}_m\big(\G(N),R\big)$. 
We denote by $\mathcal{A}_m(R)$ the set of all meromorphic functions of the form $g/h$ with $g\in\mathcal{M}_{r+m}(R),~0\neq h\in\mathcal{M}_{r}(R)$ (with any $r\in\mathbb{Z}$), and by $\mathcal{A}_{m}\big(\G(N),R\big)$ the set of all $f\in \mathcal{A}_{m}(R)$ satisfying (\ref{modularity}).
In particular, we set 
\begin{eqnarray*}
\mathcal{F}_N&=&\mathcal{A}_{0}\big(\G(N),\mathbb{Q}(\Z_N)\big),\\
\mathcal{F}&=&\bigcup_{N=1}^\infty \mathcal{F}_N.
\end{eqnarray*}
\par
For every algebraic number field $K$, let $K_{ab}$ be the maximal abelian extension of $K$, and $K_\mathbb{A}^\times$ the idele group of $K$. 
By class field theory, every element $x$ of $K_\mathbb{A}^\times$ acts on $K_{ab}$ as an automorphism. 
We then denote this automorphism by $[x,K]$. 
On the other hand, every element of $G_{\mathbb{A}+}$ acts on $\mathcal{F}$ as an automorphism (\cite[p.680]{Shimura1}). 
If $x\in G_{\mathbb{A}+}$ and $f\in \mathcal{F}$, we denote by $f^x$ the image of $f$ under $x$.

\begin{proposition}\label{Siegel-action}
Let $f(z)=\sum_{\xi}A(\xi)e(tr(\xi z)/N)\in\mathcal{F}_N$.
Then we get the followings:
\begin{itemize}
\item[\textup{(i)}] $f^\beta=f$ for $\beta\in R_N$. 
Moreover, $\mathcal{F}_N$ is the subfield of $\mathcal{F}$ consisting of all the $R_N$-invariant elements.
\item[\textup{(ii)}] 
Let $y=
\left[\begin{matrix}
1_n&0\\
0&x\cdot 1_n
\end{matrix}\right]
\in\Delta$ and $t$ be a positive integer such that $t\equiv x_q \pmod{N\mathbb{Z}_q}$
for all primes $q$.
Then we derive
\begin{equation*}
f^y=\sum_{\xi}A(\xi)^\sigma e(tr(\xi z)/N),
\end{equation*} 
where $\sigma$ is the automorphism of $\mathbb{Q}(\Z_N)$ such that $\Z_N^\sigma=\Z_N^t$.
\item[\textup{(iii)}] $f^\alpha=f\circ\alpha$ for $\alpha\in G_{\mathbb{Q}+}$.
\end{itemize}
\end{proposition}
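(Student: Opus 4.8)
The plan is to deduce all three assertions from Shimura's description of the action of $G_{\mathbb{A}+}$ on $\mathfrak{A}_0(\mathbb{Q}_{ab})$ given in \cite{Shimura1}, specialized through the factorization $G_{\mathbb{A}+}=R_N\vartriangle G_{\mathbb{Q}+}$ supplied by the preceding proposition. Since every $x\in G_{\mathbb{A}+}$ can be written as $x=\beta y\alpha$ with $\beta\in R_N$, $y\in\vartriangle$ and $\alpha\in G_{\mathbb{Q}+}$, the three statements together reduce the computation of an arbitrary $f^x$ to these three elementary factors; this is exactly why we isolate them.

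I would begin with (iii), which is the most structural. By construction the action of $G_{\mathbb{A}+}$ on $\mathfrak{A}_0(\mathbb{Q}_{ab})$ extends the natural action of the rational group $G_{\mathbb{Q}+}$ on $\mathbb{H}_n$ by $z\mapsto\alpha(z)$, so that $f^\alpha=f\circ\alpha$ for $\alpha\in G_{\mathbb{Q}+}$; here the only point to check is that $f\circ\alpha$ again belongs to $\mathfrak{A}_0(\mathbb{Q}_{ab})$ at some level, which is immediate because weight zero is preserved under composition and conjugating $\G(N)$ by $\alpha$ yields another congruence subgroup.

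For (i), I would write $\beta\in R_N$ as a scalar in $\mathbb{Q}^\times$ times a factor whose finite components lie in $GL_{2n}(\mathbb{Z}_q)$ and are congruent to $1_{2n}$ modulo $N$ at every rational prime. The scalar lies in the center and fixes every weight-zero function, while the remaining factor fixes $f$ because $f$ has level $N$ and Fourier coefficients in $\mathbb{Q}(\Z_N)$: the local congruences force trivial action both on the $\G(N)$-structure and on the cyclotomic constants. The sharper claim that $\mathfrak{A}_0\big(\G(N),\mathbb{Q}(\Z_N)\big)$ is exactly the fixed field of $R_N$ is Shimura's main theorem on the arithmetic of Siegel modular functions, which identifies the fixed field of each open subgroup of $G_{\mathbb{A}+}$ with a modular function field of prescribed level and prescribed field of Fourier coefficients.

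The heart of the matter is (ii). The element $y$ has multiplier $\nu(y)=x\in\prod_q\mathbb{Z}_q^\times$, and since it belongs to the diagonal class $\vartriangle$, whose action under Shimura's reciprocity is purely arithmetic, it fixes the transcendental factors $e(tr(\xi z)/N)$ and operates only on the constant field through the map $[x,\mathbb{Q}]$ of class field theory. The key computation is that, in Shimura's normalization, $[x,\mathbb{Q}]$ carries $\Z_N$ to $\Z_N^t$ for every positive integer $t$ with $t\equiv x_q\pmod{N\mathbb{Z}_q}$; applying this to each coefficient yields $f^y=\sum_\xi A(\xi)^\sigma e(tr(\xi z)/N)$ with $\Z_N^\sigma=\Z_N^t$. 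The main obstacle is precisely this last normalization: one must align Shimura's abstract action of $\vartriangle$ with the explicit cyclotomic Galois action, a matching that hinges on the chosen normalization of the Artin map $[\cdot,\mathbb{Q}]$ and on the sign convention for $\nu$. I would settle it by testing the formula on the generating quotients $\Phi_{(r,s)}$ of theta constants, whose behavior under diagonal similitudes is computed explicitly in \cite{Shimura1}, thereby fixing the sign and confirming the exponent $t$ rather than $t^{-1}$.
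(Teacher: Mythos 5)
Your proposal is correct and follows essentially the same route as the paper: the paper's ``proof'' is nothing more than the citations \cite[p.681]{Shimura1} and \cite[Theorem 26.8]{Shimura2}, and your outline is a faithful unpacking of exactly those results of Shimura (the geometric action of $G_{\mathbb{Q}+}$, the identification of the fixed field of $R_N$ with $\mathfrak{A}_0\big(\G(N),\mathbb{Q}(\Z_N)\big)$, and the cyclotomic action of the diagonal class $\vartriangle$ on Fourier coefficients), organized via the decomposition $G_{\mathbb{A}+}=R_N\vartriangle G_{\mathbb{Q}+}$ just as the paper does. Your extra care about the normalization $t$ versus $t^{-1}$ in (ii), checked against the theta constants $\Phi_{(r,s)}$, is a sensible safeguard but does not change the substance of the argument.
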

\begin{proof}
\cite[p.681]{Shimura1} and \cite[Theorem 26.8]{Shimura2}.
\end{proof}

\section{Shimura's reciprocity law}\label{Shimura's reciprocity law}
We begin with fundamental but necessary facts about Shimura's reciprocity law \cite[\S 26]{Shimura2}.
\par
Let $n$ be a positive integer, K be a CM-field with $[K:\mathbb{Q}]=2n$ and $\mathcal{O}_K$ be a ring of integers of $K$.
And, let $\V_1, \V_2, \ldots, \V_n$ be $n$ distinct embeddings of $K$ into $\mathbb{C}$ such that there are no two embeddings among them which are complex conjugate of each other on $K$.
Then $(K;\{\V_1, \V_2, \ldots, \V_n\})$ is a CM-type and
we can take an element $\rho$ in $K$ such that
\begin{itemize}
\item[\textup{(i)}] $\rho$ is purely imaginary, 
\item[\textup{(ii)}] $-\rho^2$ is totally positive,
\item[\textup{(iii)}] Im$(\rho^{\V_i})>0$ for all $i=1,\ldots,n$,
\item[\textup{(iv)}] Tr$_{K/\mathbb{Q}}(\rho \xi)\in\mathbb{Z}$ for all $\xi\in\mathcal{O}_K$.
\end{itemize}
We denote by $v(x)$, for $x\in K$, the vector of $\mathbb{C}^n$ whose components are $x^{\V_1},\ldots,x^{\V_n}$.
The set $L=\big\{v(x)~\big|~x\in\mathcal{O}_K\big\}$ is a lattice in $\mathbb{C}^n$.
We define an $\mathbb{R}$-bilinear form $E(z,w)$ on $\mathbb{C}^n$ by
\begin{equation*}
E(z,w)=\sum_{i=1}^{n} \rho^{\V_i}(z_i\overline{w_i}-\overline{z_i}w_i)\quad\textrm{for $z=\left[\begin{matrix}
z_1\\
\vdots\\
z_n
\end{matrix}\right]$ and $w=\left[\begin{matrix}
w_1\\
\vdots\\
w_n
\end{matrix}\right]$}.
\end{equation*}
Then $E$ becomes a non-degenerate Riemann form on the complex torus $\mathbb{C}^n/L$ satisfying
\begin{equation*}
E\big(v(x),v(y)\big)=\mathrm{Tr}_{k/\mathbb{Q}}(\rho x\overline{y}) \textrm{ ~~for $x,y \in K$},
\end{equation*}
which makes it a polarized abelian variety (\cite[p.43--44]{Shimura2}).
Hence we can find a positive integer $\delta$, a diagonal matrix $\epsilon$ with integral elements, and a complex $(n\times 2n)$-matrix $\Omega$ such that
\begin{itemize}
\item[\textup{(i)}] $E(\Omega x, \Omega y)= \delta\cdot{^t}xJy ~~~\textrm{for $x,y\in\mathbb{R}^{2n}$}$,
\item[\textup{(ii)}] $L=\Big\{\Omega\left[\begin{matrix} a\\b \end{matrix}\right]~\big|~a\in\mathbb{Z}^n, b\in\epsilon\mathbb{Z}^n \Big\}$,
\item[\textup{(iii)}] $\epsilon=\left[\begin{matrix} \epsilon_1&&&\\ &\epsilon_2&& \\ &&\ddots& \\ &&& \epsilon_n\end{matrix}\right],
~\epsilon_1=1,~\epsilon_i~|~\epsilon_{i+1}~~~\textrm{for $i=1,\ldots,n-1$}$.
\end{itemize}
(\cite[Lemma 27.2]{Shimura2} or \cite[p.675]{Shimura1}).
Now, we write $\Omega=\big[\Omega_1~~\Omega_2\big]=\big[v(e_1)~~v(e_2)~~\cdots~~v(e_{2n})\big]$ with $\Omega_1,\Omega_2\in M_n(\mathbb{C})$ and $e_1, e_2, \ldots, e_{2n}\in K$, and put $z_0=\Omega_2^{-1}\Omega_1$. 
It is well-known that $z_0\in\mathbb{H}_n$. 
Let $\Phi:K\rightarrow M_n(\mathbb{C})$ be a ring monomorphism such that
\begin{equation*}
\Phi(x)=\left[\begin{matrix}
x^{\V_1}&&& \\
&x^{\V_2}&& \\
&&\ddots& \\
&&&x^{\V_n}
\end{matrix}\right]
~~\textrm{for $x\in K$}.
\end{equation*}
Then we can define a ring monomorphism $h:K\rightarrow M_{2n}(\mathbb{Q})$ by
\begin{equation*}
\Phi(x)\Omega=\Omega\cdot {^t}h(x)~~~ \textrm{for $x\in K$}.
\end{equation*}
Here, $h(x)=\big[a_{ij}\big]_{1\leq i,j \leq 2n}$ is in fact the regular representation of $x$ with respect to $\{e_1, e_2,\ldots,e_{2n}\}$, namely $x e_i=\displaystyle\sum_{j=1}^{2n}a_{ij}e_j$.
If $\epsilon=1_n$, then $L=v(\mathcal{O}_K)=\Omega\cdot\mathbb{Z}^{2n}=\mathbb{Z}v(e_1)+\cdots+\mathbb{Z}v(e_{2n})$ so that $h(x)\in M_{2n}(\mathbb{Z})$ for $x\in\mathcal{O}_K$.
One can then readily show that 
\begin{equation*}
h(\overline{x})=J~{^t} h(x)J^{-1}~~\textrm{ for $x\in K$},
\end{equation*}
and $z_0$ is the CM-point of $\mathbb{H}_n$ induced from $h$ which corresponds to the principally polarized abelian variety $(\mathbb{C}^n/L, E)$
(\cite[p.684-685]{Shimura1} or \cite[\S 24.10]{Shimura2}).
In particular, if we set $S=\{x\in K^\times ~|~x \overline{x}\in\mathbb{Q}^\times \}$ then
$h(S)=\{h(s)~|~s\in S\}=\{\alpha\in G_{\mathbb{Q}+}~|~\alpha(z_0)=z_0\}$.

\par
Let $K^*$ be the reflex field of $K$ and $K'$ be a Galois extension of $K$ over $\mathbb{Q}$, and extend $\V_i~(i=1,\ldots,n)$ to an element of Gal$(K'/\mathbb{Q})$, which we denote again by $\V_i$.
Let $\{\psi_j\}_{j=1}^m$ be the set of all embeddings of $K^*$ into $\mathbb{C}$ obtained from $\{\V_i^{-1}\}_{i=1}^n$. 

\begin{proposition}\label{reflex}
Let $K$, $K^*$ and $\{\psi_j\}$ be as above.
\begin{itemize}
\item[\textup{(i)}] $(K^*;\{\psi_1,\ldots,\psi_m\})$ is a primitive CM-type and we have
\begin{equation*}
K^*=\mathbb{Q}\Big(\sum_{i=1}^n x^{\V_i}~|~x\in K\Big).
\end{equation*}
\item[\textup{(ii)}] If $b=\prod_j a^{\psi_j}$ with $a\in K^*$, then $b\in K$ and $b\overline{b}=N_{K^*/\mathbb{Q}}(a)$.
\end{itemize}
\end{proposition}
\begin{proof}
\cite[p.62--63]{Shimura2}.
\end{proof}
We call the CM-type $(K^*;\{\psi_j\})$ the \textit{reflex} of $( K;\{ \V_i\} )$.
By Proposition \ref{reflex}, we can define a homomorphism $\V^*:(K^*)^\times\rightarrow K^\times$ by
\begin{displaymath}
\V^* (a)=\prod_{j=1}^{m}a^{\psi_j}~~ \mathrm{for}~ a\in (K^*)^\times,
\end{displaymath}
and we have $\V^*(a)\cdot\overline{\V^*(a)}=N_{K^*/\mathbb{Q}}(a)$ for $a\in (K^*)^\times$.  
The map $h$ can be extended naturally to a homomorphism $K_{\mathbb{A}}\rightarrow M_{2n}(\mathbb{Q}_\mathbb{A})$, which we also denote by $h$.
Then for every $b\in (K^*)_{\mathbb{A}}^\times$ we get $\nu\big(h(\V^*(b))\big)=N_{K^*/\mathbb{Q}}(b)$ and $h\big(\V^*(b)^{-1}\big)\in G_{\mathbb{A}+}$ (\cite[p.172]{Shimura2}).

\begin{proposition}[Shimura's reciprocity law]\label{reciprocity}
Let $K$, $h$, $z_0$ and $K^*$ be as above. Then for every $f\in\mathcal{F}$ which is finite at $z_0$, the value $f(z_0)$ belongs to $K^*_{ab}$. Moreover, if $b\in (K^*)_\mathbb{A}^\times$, then $f^{h(\V^*(b)^{-1})}$ is finite at $z_0$ and
\begin{equation*}
f(z_0)^{[b,K^*]}=f^{h(\V^*(b)^{-1})}(z_0).
\end{equation*}
\end{proposition}
\begin{proof}
\cite[Theorem 26.8]{Shimura2}.
\end{proof}

\begin{remark}\label{class field}
For any $f\in\mathcal{F}$ which is finite at $z_0$, the value $f(z_0)$ in fact belongs to the class field $\widetilde{K^*_{ab}}$ of $K^*$ corresponding to the kernel of $\varphi^*$.

\end{remark}

\section{Class fields over cyclotomic fields}\label{Class fields over cyclotomic fields}

Let $\ell$ and $p$ be odd prime numbers. 
We also write for simplicity $\Z=\Z_\ell$. 
Set $k=\mathbb{Q}(\Z)$ and $n=({\ell-1})/{2}$ so that $2n=[k:\mathbb{Q}]$. 
For $1\leq i \leq 2n$ we denote by $\varphi_i$ the element of $\mathrm{Gal}(k/\mathbb{Q})$ defined by $\Z^{\V_i}=\Z^i$. 
Then $( k;\{ \V_1,\V_2, \ldots, \V_n\} )$ is a primitive CM-type and $( k;\{ \V_1^{-1},\V_2^{-1}, \ldots, \V_n^{-1}\} )$ is its reflex (\cite[p.64]{Shimura2}).
For a positive integer $\mu$, put 
\begin{eqnarray*}
S_\mu&=&\{a\in k^\times ~|~ a\equiv 1\pmod{ 2p^\mu}\}\\
\widetilde{S_\mu}&=&\{(a) ~|~ a\in S_\mu \},
\end{eqnarray*}
where $(a)$ is the principal ideal of $k$ generated by $a$. 
Let $E$ be the unit group of $k$ and let $k_\mu$ be the ray class field of $k$ modulo $2p^\mu$. Then we have
\begin{equation*}
\mathrm{Gal}(k_{\mu+1}/k_{\mu})\cong \widetilde{S_\mu}/\widetilde{S_{\mu+1}}\cong S_\mu E/S_{\mu+1}E \cong S_\mu/S_{\mu+1}(S_\mu \cap E)
\end{equation*}
by class field theory. 
Further, we set $H_\mu=S_{\mu+1}(S_\mu \cap E)$ and
\begin{displaymath}
\O_{\mu,i} = \left\{ \begin{array}{ll}
1+2p^\mu\Z^i~ & \textrm{for $1\leq i \leq n+1$}\vspace{0.1cm}\\
1+2p^\mu(\Z^n+\Z^{n+1}-\Z^i-\Z^{-i})~ & \textrm{for $n+2\leq i\leq 2n$}.\\
\end{array} \right.
\end{displaymath}
Since the ring of integers $\mathcal{O}_k$ of $k$ is equal to $\mathbb{Z}[\Z]$ and $S_\mu/S_{\mu+1}$ is isomorphic to $\mathcal{O}_k/p\mathcal{O}_k$ by a mapping 
\begin{eqnarray*}
S_\mu/S_{\mu+1} \longrightarrow \mathcal{O}_k/p\mathcal{O}_k \textrm{\qquad\qquad\quad ~~~~~}\\
(1+2p^\mu\O)S_{\mu+1} \longmapsto \O+p\mathcal{O}_k \textrm{~~~ for $\O\in\mathcal{O}_k$},
\end{eqnarray*}
we obtain $S_\mu/S_{\mu+1}\cong (\mathbb{Z}/p\mathbb{Z})^{2n}$ and
\begin{equation*}
S_\mu/S_{\mu+1}=\langle(1+2p^\mu\Z)S_\mu, (1+2p^\mu\Z^2)S_\mu, \ldots, (1+2p^\mu\Z^{2n})S_\mu\rangle .
\end{equation*}
Let $B=[b_{ij}]\in M_{2n}(\mathbb{Z})$ where $b_{ij}$ is an integer such that $\O_{\mu,i}=1+2p^\mu\big(\sum_{j=1}^{2n}b_{ij}\Z^j\big)$.
Then we get
\begin{displaymath}
B=
\left[ \begin{array}{cccc|cccc}
1& & & &0 &0 &\cdots &0 \\
 &1& & &\vdots &\vdots & &\vdots  \\
& &\ddots&  &\vdots &\vdots & &\vdots\\
& &  &1 &0 &0 &\cdots &0\\
\hline
 & & & 0 &1&  & &\\
 &  &-1&1&1&-1& & \\
 &\Ddots& &\vdots&\vdots& &\ddots& \\
 -1& & &1&1& & & -1
\end{array} \right].
\end{displaymath}
Hence, $S_\mu/S_{\mu+1}=\langle\O_{\mu,1} S_{\mu+1}, \O_{\mu,2} S_{\mu+1}, \ldots, \O_{\mu,2n} S_{\mu+1}\rangle$ because $\det(B)=(-1)^{n-1}$ is prime to $p$.
This shows that $S_\mu/H_\mu=\langle\O_{\mu,1} H_\mu, \O_{\mu,2} H_\mu, \ldots, \O_{\mu,2n} H_\mu\rangle$ due to the fact $H_\mu\supset S_{\mu+1}$.
\par
Now, we define an endomorphism $\V^*$ of $k^\times$ by
\begin{displaymath}
\V^* (a)=\prod_{i=1}^{n}a^{\V_i^{-1}}~~ \mathrm{for}~ a\in k^\times
\end{displaymath}
and an endomorphism $\V^+$ of $k$ by 
\begin{displaymath}
\V^+(a)=\sum_{i=1}^n a^{\V_i^{-1}}  ~~\mathrm{for}~ a\in k.
\end{displaymath}
And, we let
\begin{displaymath}
\E_{\mu,i} = \left\{ \begin{array}{ll}
1+2p^\mu\V^+(\Z^i)~ & \textrm{for $1\leq i \leq n+1$}\\
1~ & \textrm{for $n+2\leq i\leq 2n$}\\
\end{array} \right.
\end{displaymath}
so that $\V^*(\O_{\mu,i})H_\mu=\E_{\mu,i}H_\mu$ for all $1\leq i\leq 2n$.
Since $\V^*(H_\mu)\subset H_\mu$, we can define an endomorphism $\widetilde{\V^*_\mu}$ of $S_\mu/H_\mu$ by $\widetilde{\V^*_\mu}(aH_\mu)=\V^*(a)H_\mu$.
Let $K_\mu$ be the class field of $k$ corresponding to the kernel of $\widetilde{\V^*_\mu}$.
Note that $K_\mu=k_\mu(k_{\mu+1}\cap \widetilde{k_{ab}})$, where $\widetilde{k_{ab}}$ is the class field of $k$ in Remark \ref{class field}.
Then we achieve
\begin{equation}\label{Galois}
\textrm{Gal}(K_\mu/k_\mu)\cong \sfrac{S_\mu/H_\mu}{ker(\widetilde{\V^*_\mu})}\cong \widetilde{\V^*_\mu}(S_\mu/H_\mu)=\langle\E_{\mu,1}H_\mu, \E_{\mu,2}H_\mu,\ldots,\E_{\mu,n+1}H_\mu\rangle.
\end{equation}
Observe that $K_\mu$ is the fixed field of $\Big\{\big(\frac{k_{\mu+1}/k}{(\O)}\big)~|~\O H_\mu\in ker(\widetilde{\V^*_\mu}) \Big\}$ and 
\begin{equation*}
\textrm{Gal}(K_\mu/k_\mu)=\bigg\langle\Big(\frac{k_{\mu+1}/k}{(\O_{\mu,1})}\Big), \Big(\frac{k_{\mu+1}/k}{(\O_{\mu,2})}\Big), \ldots, \Big(\frac{k_{\mu+1}/k}{(\O_{\mu,n+1})}\Big)\bigg\rangle. 
\end{equation*}
Here, $\big(\frac{k_{\mu+1}/k}{\cdot}\big)$ is the Artin map of $k_{\mu+1}/k$.

\begin{proposition}\label{cyclotomic1}
Let $N$ be a positive integer, $K=\mathbb{Q}(\Z_N)$ and $K^+$ be its maximal real subfield.
Let $E$ (resp. $E^+$) be the unit group of $K$ (resp. $K^+$) and $W$ be the group of roots of unity in $K$.
Then we have
\begin{displaymath}
|E:WE^+| = \left\{ \begin{array}{ll}
1~ & \textrm{if $N$ is a prime power}\\
2~ & \textrm{if $N$ is not a prime power.}\\
\end{array} \right.
\end{displaymath}
\end{proposition}
\begin{proof}
\cite[Corollary 4.13]{Washington}.
\end{proof}

\begin{proposition}\label{cyclotomic2}
Let $\ell$ be any prime and $m\in \mathbb{Z}_{>0}$. 
Let $\mathbb{Q}(\Z_{\ell^m})^+$ be the maximal real subfield of $\mathbb{Q}(\Z_{\ell^m})$ and $E_{\ell^m}^+$ be its unit group.
Further, we let $C_{\ell^m}^+$ be the subgroup of $E_{\ell^m}^+$ generated by $-1$ and the real units
\begin{equation*}
\xi_a=\Z_{2\ell^m}^{{1-a}}\cdot\frac{1-\Z_{\ell^m}^a}{1-\Z_{\ell^m}}\in\mathbb{R},~~1<a<\frac{\ell^m}{2},~~\gcd(a,\ell)=1.
\end{equation*}
Then
\begin{equation*}
h_{\ell^m}^+=|E_{\ell^m}^+:C_{\ell^m}^+|,
\end{equation*}
where $h_{\ell^m}^+$ is the class number of $\mathbb{Q}(\Z_{\ell^m})^+$.
\end{proposition}
\begin{proof}
\cite[Lemma 8.1 and Theorem 8.2]{Washington}.
\end{proof}

\begin{lemma}\label{unit}
Let $\ell$ be an odd prime and $p$ be an odd prime such that $p\nmid \ell h_{\ell}^+$, where $h_{\ell}^+$ is the class number of the maximal real subfield of $k$.
Then $H_\mu/S_{\mu+1}$ is generated by real units of $k$ for all $\mu\in\mathbb{Z}_{>0}$.
\end{lemma}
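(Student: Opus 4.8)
The plan is to reduce the claim to the sharper statement that every unit of $k$ lying in $S_\mu$ is automatically real, and then, invoking $p\nmid h_\ell^+$, to arrange that the real generators may be taken to be cyclotomic units. First I would rewrite the quotient: since $S_{\mu+1}\subseteq S_\mu$, the second isomorphism theorem gives
\begin{equation*}
H_\mu/S_{\mu+1}=S_{\mu+1}(S_\mu\cap E)/S_{\mu+1}\cong (S_\mu\cap E)/(S_{\mu+1}\cap E),
\end{equation*}
so it suffices to exhibit real-unit generators of the right-hand group; in fact it is enough to prove the containment $S_\mu\cap E\subseteq E_\ell^+$, where $E_\ell^+$ is the unit group of the maximal real subfield $k^+$ of $k$.

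The key point is a computation with roots of unity. Let $W=\langle -\Z\rangle$ be the group of roots of unity of $k$, of order $2\ell$. I would first check that $W\cap S_\mu=\{1\}$. Indeed, $p$ being odd forces $-1\not\equiv 1\pmod{2p^\mu}$, while for $1\le j\le \ell-1$ the relation $\prod_{j=1}^{\ell-1}(1-\Z^j)=\ell$ shows that $N_{k/\mathbb{Q}}(1-\Z^j)=\pm\ell$ is prime to $2p$ (here $\ell$ is odd and $p\nmid\ell$); hence neither $\Z^j$ nor $-\Z^j$ is congruent to $1$ modulo $2p^\mu$. So $1$ is the only root of unity in $S_\mu$.

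Next, since $\ell$ is a prime power, Proposition \ref{cyclotomic1} yields $E=WE_\ell^+$, so any $u\in S_\mu\cap E$ factors as $u=\omega v$ with $\omega\in W$ and $v\in E_\ell^+$ real. Complex conjugation fixes the ideal $2p^\mu\mathcal{O}_k$, so $\overline{u}\in S_\mu\cap E$ as well, and therefore $u/\overline{u}=\omega^2\in W\cap S_\mu=\{1\}$. Thus $\omega^2=1$, which forces $\omega=\pm 1$ and $u=\pm v\in E_\ell^+$. This proves $S_\mu\cap E\subseteq E_\ell^+$, so $H_\mu/S_{\mu+1}$ is generated by images of real units.

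Finally, to make the generators explicit — and this is where the hypothesis $p\nmid h_\ell^+$ is actually used — I would observe that $(S_\mu\cap E_\ell^+)/(S_{\mu+1}\cap E_\ell^+)$ injects into $S_\mu/S_{\mu+1}\cong (\mathbb{Z}/p\mathbb{Z})^{2n}$ and is thus an elementary abelian $p$-group. Writing $C_\ell^+$ for the group of cyclotomic units of Proposition \ref{cyclotomic2}, the index $[E_\ell^+:C_\ell^+]=h_\ell^+$ is prime to $p$, so raising to the $h_\ell^+$-th power is an automorphism of this $p$-group; since $u^{h_\ell^+}\in C_\ell^+\cap S_\mu$ for every $u\in S_\mu\cap E_\ell^+$, the images of the cyclotomic units $\xi_a$ already generate the quotient. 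I expect the main obstacle to be the root-of-unity computation $W\cap S_\mu=\{1\}$ together with the conjugation argument collapsing $\omega$ to $\pm1$: this is precisely the step forcing $p\nmid\ell$, after which realness is automatic and the passage to cyclotomic generators is a formal consequence of $p\nmid h_\ell^+$.
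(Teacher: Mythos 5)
Your proof is correct, but it follows a genuinely different route from the paper's. The paper never establishes that units in $S_\mu$ are real: it applies the $(2\ell h_\ell^+)$-th power map, which is an automorphism of the elementary abelian $p$-group $S_\mu/S_{\mu+1}$ because $(p,2\ell h_\ell^+)=1$, to replace the image of $E\cap S_\mu$ by that of $E^{2\ell h_\ell^+}\cap S_\mu$, and then observes via Propositions \ref{cyclotomic1} and \ref{cyclotomic2} that $E^{2\ell h_\ell^+}\subset\langle \xi_a^{2\ell}\,|\,1<a<\tfrac{\ell}{2}\rangle$ (the exponent $2\ell$ annihilates $W$, and the exponent $h_\ell^+$ pushes $E_\ell^+$ into $C_\ell^+$), so realness of the generators comes in one stroke. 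You instead prove the sharper containment $S_\mu\cap E\subseteq E_\ell^+$, via $W\cap S_\mu=\{1\}$ (a norm computation) together with the conjugation identity $u/\overline{u}=\omega^2$; this is stronger than what the paper shows, and it cleanly separates the hypotheses: your realness argument needs only $p\nmid 2\ell$, while $p\nmid h_\ell^+$ enters only in your final reduction to cyclotomic units, which is essentially the paper's power-map trick and is in fact the part the paper uses later (Corollary \ref{unit2} cites the proof of this lemma for the bound $|H_\mu/S_{\mu+1}|\leq p^{n-1}$, which comes from the $n-1$ units $\xi_a$ and not from mere realness, so your ``optional'' last step should be kept). Two points to tighten: your norm argument literally rules out $\zeta^j\equiv 1\pmod{2p^\mu}$ but not $-\zeta^j\equiv 1\pmod{2p^\mu}$; either compute $N_{k/\mathbb{Q}}(1+\zeta^j)=\Phi_\ell(-1)=1$, or simply note that your conjugation step only needs $\langle\zeta\rangle\cap S_\mu=\{1\}$, since $\omega^2=\zeta^{2j}$ is always an $\ell$-th root of unity. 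Also, in the last step the generators are the images of the elements of $C_\ell^+\cap S_\mu$ (products of the $\xi_a$ that happen to lie in $S_\mu$), not of the $\xi_a$ themselves, which need not lie in $S_\mu$.
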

\begin{proof}
The $(2\ell h_\ell^+)$-th power mapping of $S_\mu/S_{\mu+1}$ induces an automorphism of itself because $\gcd(p, 2\ell h_{\ell}^+)=1$. Thus the image of $E\cap S_\mu$ in $S_\mu/S_{\mu+1}$ is the same as that of $E^{2\ell h_\ell^+}\cap S_\mu$.
By Proposition \ref{cyclotomic1} and \ref{cyclotomic2}, $E^{2\ell h_\ell^+}\subset\langle \xi_a^{2\ell}~ |~ 1 < a < {\ell}/{2} \rangle$ where $\xi_a=\Z_{2\ell}^{{1-a}}({1-\Z^a})/({1-\Z})\in\mathbb{R}$.  
Therefore $H_\mu/S_{\mu+1}=S_{\mu+1}(E^{2\ell h_\ell^+}\cap S_\mu)/S_{\mu+1}$ is generated by real units of $k$.
\end{proof}
Let $M_\ell(p)=[m_{ij}]\in M_{(n+1)\times 2n}(\mathbb{Z}/p\mathbb{Z})$ where $m_{ij}$ is the coefficient of $\Z^j$ in $\V^+(\Z^i)$ in $\mathbb{Z}/p\mathbb{Z}$. Then we get
\begin{displaymath}
m_{ij} = \left\{ \begin{array}{ll}
1~ & \textrm{if~ $\overline{i}\cdot\overline{j}^{~-1}\in \{\overline{1},\overline{2},\ldots,\overline{n}\}$ in $\mathbb{Z}/\ell\mathbb{Z}$}\\
0~ & \textrm{otherwise}.\\
\end{array} \right.
\end{displaymath}
We can then easily see that the rank of $M_\ell(p)$ is equal to the dimension of the vector subspace $\langle\E_{\mu,1}S_{\mu+1}, \E_{\mu,2}S_{\mu+1},\ldots,\E_{\mu,n+1}S_{\mu+1}\rangle$ in $S_\mu/S_{\mu+1}$.
\begin{lemma}\label{independence}
Let $\ell$ and $p$ be odd primes and $\mu\in\mathbb{Z}_{>0}$. For $1\leq i,j\leq 2n$, let
\begin{displaymath}
n_{ij} = \left\{ \begin{array}{ll}
1~ & \textrm{if~ $\overline{i}\cdot\overline{j}\in \{\overline{1},\overline{2},\ldots,\overline{n}\}$ in $\mathbb{Z}/\ell\mathbb{Z}$}\\
0~ & \textrm{otherwise}\\
\end{array} \right.
\end{displaymath}
and let $N_{\ell}=\big[n_{ij}\big]_{1\leq i,j\leq n}\in M_{n}(\mathbb{Z})$. 
Then the images $\E_{\mu,1}, \E_{\mu,2}, \ldots, \E_{\mu,n+1}$ are linearly independent in $S_\mu/S_{\mu+1}$ if and only if $p\nmid \det(N_\ell)$.
\end{lemma}
\begin{proof}

Let $N_{\ell}^{'}=\big[n_{ij}\big]_{\substack{1\leq i\leq n+1 \\ 1\leq j\leq 2n}}\in M_{(n+1)\times 2n}(\mathbb{Z}/p\mathbb{Z})$. It is clear that rank$\big(M_\ell(p)\big)$=rank$(N_\ell^{'})$. 
Hence the images $\E_{\mu,1}, \E_{\mu,2}, \ldots, \E_{\mu,n+1}$ are linearly independent in $S_{\mu}/S_{\mu+1}$ if and only if $N_\ell^{'}$ has rank $n+1$. 
Now, we claim that rank$(N_\ell^{'})=n+1$ if and only if $N_\ell^{'}$ induces the following row echelon form
\begin{equation}\label{matrix1}
\left[ \begin{array}{cccccccc}
~~1& & & & & & &-1 \\
 &~~1& & & & &-1 &  \\
 & &~\ddots&  & &\Ddots & &\\
 & &  &~~1 &-1 & & &\\
 & & & &~1&~1&~\cdots&~1
\end{array} \right].
\end{equation}
The ``if'' part is obvious. 
Note that if $n_{ij}=1$ (resp.~0), then $n_{i~\ell-j}=0$ (resp.~1) because there are no two automorphisms among $\V_1,\V_2, \ldots, \V_n$ which are complex conjugate of each other. 
Let $v_i$ be the $i$th row vector of $N_\ell^{'}$ for $1\leq i\leq n+1$ and let 
\begin{displaymath}
v_i^{'} = \big[v_{ij}^{'}\big]_{1\leq j\leq 2n}= \left\{ \begin{array}{ll}
2v_i-v_n-v_{n+1}~ & \textrm{for~ $1\leq i \leq n$}\\
v_n+v_{n+1}~ & \textrm{for $i=n+1$}.\\
\end{array} \right.
\end{displaymath}
Observe that $v_{n+1}^{'}=[1~~1~~\cdots~~1]$ and $2\in (\mathbb{Z}/p\mathbb{Z})^{\times}$.  
For $1\leq i \leq n$, if $v_{ij}^{'}=1$ (resp.~$-1$) then $v_{i~\ell-j}=-1$ (resp.~$1$). 
Thus we can write $v_{i}^{'}$ as a linear combination of the row vectors of the above row echelon form (\ref{matrix1}), and hence the claim is proved.
By the above claim rank$(N_\ell^{'})=n+1$ if and only if $\det\Big(\big[n_{ij}\big]_{1\leq i,j\leq n+1}\Big)\not\equiv 0\pmod{p}$. 
Since 
\begin{equation*}
\big[-n_{1j}+n_{nj}+n_{n+1~j}\big]_{1\leq j\leq n+1}=[0~~0~~\cdots~~0~~1],
\end{equation*}
we derive 
\begin{equation*}
\det\Big(\big[n_{ij}\big]_{1\leq i,j\leq n+1}\Big)=\det(N_\ell). 
\end{equation*}
This completes the proof.
\end{proof}

\begin{theorem}\label{dimension}
Let $\ell$ and $p$ be odd primes and put $n=({\ell-1})/{2}$.
Further, let $M_\ell(p)$ and $N_\ell$ be as above. 
If $p\nmid \ell h_\ell^+ n$, then for every $\mu\in\mathbb{Z}_{>0}$ we deduce
\begin{equation*}
\mathrm{Gal}(K_\mu/k_\mu)\cong (\mathbb{Z}/p\mathbb{Z})^{rank(M_\ell(p))}.
\end{equation*}
And, if $p\nmid \det(N_\ell)$, then we obtain
\begin{equation*}
\mathrm{Gal}(K_\mu/k_\mu)\cong (\mathbb{Z}/p\mathbb{Z})^{n+1}.
\end{equation*}
\end{theorem}
\begin{proof}
By (\ref{Galois}) it suffices to show that the dimension of $\langle\E_{\mu,1}S_{\mu+1}, \E_{\mu,2}S_{\mu+1},\ldots,\E_{\mu,n+1}S_{\mu+1}\rangle$ in $S_\mu/S_{\mu+1}$ is equal to the dimension of $\langle\E_{\mu,1}H_\mu, \E_{\mu,2}H_\mu,\ldots,\E_{\mu,n+1}H_\mu\rangle$ in $S_\mu/H_\mu$.
If $n=1$, then $H_\mu=S_{\mu+1}$ by Lemma \ref{unit}; hence we are done in this case. 
Thus we may assume $n\geq 2$.
It is well known that $\mathbb{Z}[\Z+\Z^{-1}]$ is the ring of integers of the maximal real subfield $\mathbb{Q}(\Z+\Z^{-1})$ of $k$ and $[\mathbb{Q}(\Z+\Z^{-1}):\mathbb{Q}]=n$ (\cite[Theorem 4] {Liang}).
Therefore, if $uS_{\mu+1}\in H_\mu/S_{\mu+1}$, then by Lemma \ref{unit} we can write 
\begin{displaymath}
u=1+2p^\mu\big(a_0+a_1(\Z+\Z^{-1})+a_2(\Z+\Z^{-1})^2+\cdots+a_{n-1}(\Z+\Z^{-1})^{n-1}\big)\in S_\mu\cap E
\end{displaymath}
for some $a_i\in \mathbb{Z}$ with $0\leq i\leq n-1$.
If $p~|~a_i$ for $1\leq i\leq n-1$, then 
\begin{eqnarray*}
N_{\mathbb{Q}(\Z+\Z^{-1})/\mathbb{Q}}(u)&\equiv& 1+2p^\mu na_0\pmod{2p^{\mu+1}}\\
&=&1.
\end{eqnarray*}
Since $p\nmid n$, we have $p~|~a_0$ and so $u\in S_{\mu+1}$.
\par
Now, we set
\begin{eqnarray*}
b&=&a_0+a_1(\Z+\Z^{-1})+a_2(\Z+\Z^{-1})^2+\cdots+a_{n-1}(\Z+\Z^{-1})^{n-1}\\
 &=&b_0+b_1\Z+b_2\Z^2+\cdots+b_n\Z^n+b_n\Z^{-n}+b_{n-1}\Z^{-(n-1)}+\cdots+b_1\Z^{-1},
\end{eqnarray*}
where $b_i\in\mathbb{Z}$ for $0\leq i\leq n$. Observe that $b_n=0$, $b_{n-1}=a_{n-1}$ and $b_{n-2}=a_{n-2}$. Consider the following matrix
\begin{displaymath}
\mathbf{M} =
\left[\begin{matrix}
1& & & & & & &-1\\
 &1& & & & &-1&\\
 & &\ddots& & &\Ddots& &\\
&&&1&-1&&&\\
&&&&1&1&\cdots&1\\
b_1-b_0& b_2-b_0 & \cdots & b_n-b_0 &b_n-b_0 &\cdots&b_2-b_0&b_1-b_0
\end{matrix}\right]
\in M_{(n+2)\times 2n}(\mathbb{Z}/p\mathbb{Z}).
\end{displaymath}
The last row of $\mathbf{M}$ is induced from the coefficients of $\Z^j$ for $1\leq j \leq 2n$ in $b$. 
So, $\mathbf{M}$ is row equivalent to
\begin{eqnarray*}
\mathbf{M} &\sim&   \left[\begin{matrix}
1& & & & & & &-1\\
 &1& & & & &-1&\\
 & &\ddots& & &\Ddots& &\\
&&&1&-1&&&\\
&&&&1&1&\cdots&1\\
\quad0\quad&\quad 0\quad &\quad \cdots\quad &\quad0\quad &2(b_n-b_0) &2(b_{n-1}-b_0)&\cdots&2(b_1-b_0)
\end{matrix}\right]\\
&\sim& \left[\begin{matrix}
1& & & & & & &-1\\
 &1& & & & &-1&\\
 & &\ddots& & &\Ddots& &\\
&&&1&-1&&&\\
&&&&1&1&\cdots&1\\
~0~& ~0~ & \cdots &~0~ &~0~ &2b_{n-1}&\cdots&2b_1
\end{matrix}\right]\quad\textrm{because $b_n=0$}.
\end{eqnarray*}
Here we claim that if $u\notin S_{\mu+1}$, then the rank of $\mathbf{M}$ is $n+2$. 
Indeed, if $p\nmid a_{n-1}$ or $p\nmid a_{n-2}$ then we are done. 
Otherwise, we get
\begin{eqnarray*}
b_{n-3}&\equiv& a_{n-3}\pmod{p}\\
b_{n-4}&\equiv& a_{n-4}\pmod{p}.
\end{eqnarray*}
Hence by induction we ensure that the rank of $\mathbf{M}$ is $n+1$ if and only if $p~|~a_i$ for all $1\leq i \leq n-1$. 
Since $u\notin S_{\mu+1}$, we obtain $p\nmid a_i$ for some $1\leq i \leq n-1$, and the claim is proved. 
In the proof of Lemma \ref{independence} we already showed that every row vector of $M_\ell(p)$ can be written as a linear combination of the row vectors of the matrix (\ref{matrix1}). 
Therefore, if $u\notin S_{\mu+1}$ then by the above claim for each $1\leq i \leq n+1$ the images of $\E_{\mu,i}$ and $u$ in $S_\mu/S_{\mu+1}$ are linearly independent, as desired.
Furthermore if $p\nmid\det(N_\ell)$, then by Lemma \ref{independence} we achieve rank$\big(M_\ell(p)\big)=n+1$.
\end{proof}

\begin{corollary}\label{unit2}
Suppose $p\nmid \ell h_\ell^+ n$.
Then $K_\mu$ becomes the ray class field $k_{\mu+1}$ for all $\mu\in\mathbb{Z}_{>0}$ if and only if $\dim_{\mathbb{Z}/p\mathbb{Z}}(H_1/S_2)=n-1$ and $p\nmid \det(N_\ell)$. 
\end{corollary}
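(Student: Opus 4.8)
The plan is to translate the field equality $K_\mu=k_{\mu+1}$ into an equality of $\mathbb{Z}/p\mathbb{Z}$-dimensions and then force those dimensions by two a priori bounds. Since $k_\mu\le K_\mu\le k_{\mu+1}$, the group $\mathrm{Gal}(K_\mu/k_\mu)$ is a quotient of $\mathrm{Gal}(k_{\mu+1}/k_\mu)$, so $K_\mu=k_{\mu+1}$ holds if and only if these two elementary abelian $p$-groups have equal $\mathbb{Z}/p\mathbb{Z}$-dimension. By Theorem \ref{dimension} (using $p\nmid\ell h_\ell^+n$) the left-hand dimension is $\mathrm{rank}(M_\ell(p))$, while from $\mathrm{Gal}(k_{\mu+1}/k_\mu)\cong S_\mu/H_\mu$ together with $S_\mu/S_{\mu+1}\cong(\mathbb{Z}/p\mathbb{Z})^{2n}$ and $S_{\mu+1}\subseteq H_\mu$ the right-hand dimension equals $2n-\dim_{\mathbb{Z}/p\mathbb{Z}}(H_\mu/S_{\mu+1})$. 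Thus the first step records
\begin{equation*}
K_\mu=k_{\mu+1}\iff \mathrm{rank}(M_\ell(p))=2n-\dim_{\mathbb{Z}/p\mathbb{Z}}(H_\mu/S_{\mu+1}).
\end{equation*}

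Next I would install the two bounds that make this rigid. On one side $M_\ell(p)\in M_{(n+1)\times 2n}(\mathbb{Z}/p\mathbb{Z})$ gives $\mathrm{rank}(M_\ell(p))\le n+1$. On the other, $H_\mu/S_{\mu+1}$ is the image of $E\cap S_\mu$ in $S_\mu/S_{\mu+1}$; since $E$ has unit rank $n-1$ (as $k$ is a CM field of degree $2n$) and the roots of unity of $k$ meet $S_\mu$ only in $1$ (because $\omega-1$ is a $p$-unit for every nontrivial root of unity $\omega$), the group $E\cap S_\mu$ is free of rank $n-1$. As $u=1+2p^\mu\O\mapsto\O+p\mathcal{O}_k$ is a homomorphism into $\mathcal{O}_k/p\mathcal{O}_k$, its image satisfies $\dim_{\mathbb{Z}/p\mathbb{Z}}(H_\mu/S_{\mu+1})\le n-1$. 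Hence $\mathrm{rank}(M_\ell(p))\le n+1\le 2n-\dim_{\mathbb{Z}/p\mathbb{Z}}(H_\mu/S_{\mu+1})$, so the displayed equality forces \emph{simultaneously} $\mathrm{rank}(M_\ell(p))=n+1$ and $\dim_{\mathbb{Z}/p\mathbb{Z}}(H_\mu/S_{\mu+1})=n-1$; by Lemma \ref{independence} the former is equivalent to $p\nmid\det(N_\ell)$.

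To dispose of the quantifier ``for all $\mu$'' I would show $\dim_{\mathbb{Z}/p\mathbb{Z}}(H_\mu/S_{\mu+1})$ is non-decreasing in $\mu$ via the $p$-th power map. For $u=1+2p^\mu\O\in E\cap S_\mu$ one expands $u^p=1+2p^{\mu+1}\O+\sum_{j=2}^{p}\binom{p}{j}(2p^\mu\O)^j$; since $p$ is odd, for $2\le j\le p-1$ the factor $\binom{p}{j}$ is divisible by $p$ and $(2p^\mu\O)^j$ by $p^{j\mu}$, while the $j=p$ term is divisible by $p^{p\mu}$, so every correction lies in $2p^{\mu+2}\mathcal{O}_k$ once $\mu\ge 1$. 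Therefore $u^p\in E\cap S_{\mu+1}$, and under the identifications $S_\mu/S_{\mu+1}\cong\mathcal{O}_k/p\mathcal{O}_k\cong S_{\mu+1}/S_{\mu+2}$ the classes of $u$ and of $u^p$ both map to $\O+p\mathcal{O}_k$. As $H_{\mu+1}/S_{\mu+2}$ contains all such classes of $u^p$, this exhibits $H_\mu/S_{\mu+1}\subseteq H_{\mu+1}/S_{\mu+2}$ as subspaces of $\mathcal{O}_k/p\mathcal{O}_k$, giving the monotonicity.

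The corollary then assembles as follows. If $K_\mu=k_{\mu+1}$ for all $\mu$, then taking $\mu=1$ in the equivalence of the first paragraph and applying the rigidity of the second yields $p\nmid\det(N_\ell)$ and $\dim_{\mathbb{Z}/p\mathbb{Z}}(H_1/S_2)=n-1$. Conversely, assuming these two conditions, Lemma \ref{independence} gives $\mathrm{rank}(M_\ell(p))=n+1$, while monotonicity together with the ceiling $\dim\le n-1$ forces $\dim_{\mathbb{Z}/p\mathbb{Z}}(H_\mu/S_{\mu+1})=n-1$ for every $\mu\ge 1$; the equivalence then returns $K_\mu=k_{\mu+1}$ for all $\mu$. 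I expect the only genuine point to be the monotonicity step of the third paragraph, that is, verifying the $p$-power congruence survives the modulus $2p^{\mu+2}$ so that the image of the unit lattice in the graded pieces can only grow with $\mu$; the pairing of the two elementary bounds into an equality, and the bookkeeping with the isomorphisms $S_\mu/S_{\mu+1}\cong\mathcal{O}_k/p\mathcal{O}_k$, should be routine.
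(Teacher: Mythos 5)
Your proof is correct and takes essentially the same route as the paper's: the same counting (via Theorem \ref{dimension} for $\mathrm{Gal}(K_\mu/k_\mu)$ and $|\mathrm{Gal}(k_{\mu+1}/k_\mu)|=p^{2n}/|H_\mu/S_{\mu+1}|$), the same $p$-th power trick to propagate $\dim_{\mathbb{Z}/p\mathbb{Z}}(H_1/S_2)=n-1$ to all $\mu$, and Lemma \ref{independence} to translate $\mathrm{rank}(M_\ell(p))=n+1$ into $p\nmid\det(N_\ell)$. The only minor difference is that you derive the ceiling $\dim_{\mathbb{Z}/p\mathbb{Z}}(H_\mu/S_{\mu+1})\le n-1$ directly from Dirichlet's unit theorem together with the fact that no nontrivial root of unity lies in $S_\mu$, whereas the paper extracts the same bound from the cyclotomic-unit argument in the proof of Lemma \ref{unit}.
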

\begin{proof}
Suppose that $\dim_{\mathbb{Z}/p\mathbb{Z}}(H_1/S_2)=n-1$ and $p\nmid \det(N_\ell)$.
We claim that $\dim_{\mathbb{Z}/p\mathbb{Z}}(H_\mu/S_{\mu+1})=n-1$ for all $\mu\in\mathbb{Z}_{>0}$.
Indeed, let $\varepsilon_1, \varepsilon_2, \ldots, \varepsilon_{n-1}$ be elements of $S_1\cap E$ whose images form a basis for $H_1/S_2$. 
Since $\varepsilon_i^p \in H_2-S_3$ for all $i$, the images $\varepsilon_1^p, \varepsilon_2^p, \ldots, \varepsilon_{n-1}^p$ turn out to be a basis for $H_2/S_3$. 
And by induction the claim is proved.
By Theorem \ref{dimension} and the assumptions, we deduce
\begin{eqnarray*}
|\mathrm{Gal}(K_{\mu}/k_\mu)|&=&|\widetilde{\V_\mu^*}(S_\mu/H_\mu)|=p^{n+1},\\
|\mathrm{Gal}(k_{\mu+1}/k_\mu)|&=&|S_\mu/H_\mu|=\frac{|S_\mu/S_{\mu+1}|}{|H_\mu/S_{\mu+1}|}=p^{n+1}.
\end{eqnarray*}
Therefore $K_\mu=k_{\mu+1}$ because $K_\mu\subset k_{\mu+1}$.
\par
Conversely, suppose that $K_\mu=k_{\mu+1}$ for all $\mu\in\mathbb{Z}_{>0}$.
It follows from the proof of Lemma \ref{unit} that $|H_\mu/S_{\mu+1}|\leq p^{n-1}$, and so $|\mathrm{Gal}(k_{\mu+1}/k_\mu)|\geq p^{n+1}$.
On the other hand, $|\mathrm{Gal}(K_{\mu}/k_\mu)|\leq p^{n+1}$ by the formula (\ref{Galois}).
Since $K_\mu=k_{\mu+1}$, we have $\dim_{\mathbb{Z}/p\mathbb{Z}}(H_1/S_2)=n-1$ and $|\mathrm{Gal}(K_{\mu}/k_\mu)|= p^{n+1}$; hence $p\nmid \det(N_\ell)$ by Lemma \ref{independence} and Theorem \ref{dimension}.
\end{proof}

\begin{remark}
\begin{itemize}
\item[(i)] 
We are able to show that $\det(N_\ell)\neq 0$ for $\ell\leq 10000$ with an aid of Maple software, from which we conjecture that $\det(N_\ell)\neq 0$ holds for all odd primes $\ell$. 
\item[(ii)] Let $\xi_a=\Z_{2\ell}^{{1-a}} ({1-\Z^a})/({1-\Z})\in\mathbb{R}$ for $1 < a < {\ell}/{2}$. When $\ell=5$, we obtain
\begin{eqnarray*}
\xi_2^{48}&\equiv& 1 \pmod{14}\\
&\not\equiv&1 \pmod{98}.
\end{eqnarray*}
Thus $\dim_{\mathbb{Z}/7\mathbb{Z}}(H_1/S_2)=1$, and Corollary \ref{unit2} is true for $p=7$. 
In a similar way, we can show that $\dim_{\mathbb{Z}/p\mathbb{Z}}(H_1/S_2)=1$ holds for all odd primes $p\leq 101$ except $p=3$.
When $\ell=7$, we see $\dim_{\mathbb{Z}/p\mathbb{Z}}(H_1/S_2)=2$ for all odd primes $p\leq 101$.
So, we also conjecture that for each odd prime $\ell$, $\dim_{\mathbb{Z}/p\mathbb{Z}}(H_1/S_2)=n-1$ holds for almost all odd primes $p$.
If the above two conjectures are true, then we have the isomorphism
\begin{equation*}
\mathrm{Gal}(k_{\mu+1}/k_\mu)\cong (\mathbb{Z}/p\mathbb{Z})^{n+1}
\end{equation*}
for each odd prime $\ell$ and almost all odd primes $p$.
\item[(iii)] The following table lists the generators of $H_1/S_2$ for $\ell=5,7$.

\begin{tabular}{|c|c|c|c|c|c|c|}
\cline{1-3} \cline{5-7}
\multicolumn{1}{|c|}{$p$} & \multicolumn{2}{c|}{generators of $H_1/S_2$} && \multicolumn{1}{|c|}{$p$} & \multicolumn{2}{c|}{generators of $H_1/S_2$}\\
\cline{2-3} \cline{6-7}
& $\ell$=5 & $\ell$=7 &&& $\ell$=5 & $\ell$=7\\
\cline{1-3} \cline{5-7}
3 & 1 & $\X_2^{182}, \X_3^{182}$ & &47& $\X_2^{2208}$& $\X_2^{726754}, \X_3^{726754}$\\
5 & $\X_2^{60}$  & $\X_2^{868}, \X_3^{868}$ & &53& $\X_2^{1404}$& $\X_2^{148876}, \X_3^{148876}$\\
7 & $\X_2^{48}$  & $\X_2^{42}, \X_3^{42}$ & &59& $\X_2^{174}$ & $\X_2^{1437646}, \X_3^{1437646}$\\
11& $\X_2^{30}$  & $\X_2^{1330}, \X_3^{1330}$ && 61& $\X_2^{60}$  & $\X_2^{40740}, \X_3^{40740}$\\
13& $\X_2^{84}$  & $\X_2^{84}, \X_3^{84}$ & &67& $\X_2^{4488}$& $\X_2^{100254}, \X_3^{100254}$\\
17& $\X_2^{72}$  & $\X_2^{17192}, \X_3^{34384}$ && 71& $\X_2^{210}$ & $\X_2^{70}, \X_3^{70}$\\
19& $\X_2^{18}$  & $\X_2^{16002}, \X_3^{16002}$ && 73& $\X_2^{1332}$& $\X_2^{226926}, \X_3^{453852}$\\
23& $\X_2^{528}$ & $\X_2^{12166}, \X_3^{12166}$ && 79& $\X_2^{78}$  & $\X_2^{164346}, \X_3^{164346}$\\ 
29& $\X_2^{848}$ & $\X_2^{28}, \X_3^{28}$ && 83& $\X_2^{6888}$& $\X_2^{574}, \X_3^{574}$\\
31& $\X_2^{30}$  & $\X_2^{69510}, \X_3^{69510}$ && 89& $\X_2^{132}$ & $\X_2^{1233694}, \X_3^{4934776}$\\
37& $\X_2^{684}$ & $\X_2^{16884}, \X_3^{16884}$ && 97& $\X_2^{2352}$ & $\X_2^{672}, \X_3^{672}$\\
41& $\X_2^{120}$ & $\X_2^{280}, \X_3^{280}$ && 101& $\X_2^{300}$ & $\X_2^{7212100}, \X_3^{7212100}$\\
\cline{5-7} 
43& $\X_2^{1848}$& $\X_2^{42}, \X_3^{42}$\\
\cline{1-3}
\end{tabular}
\end{itemize}
\end{remark}

\section{Theta functions}\label{Theta functions}
In this section we shall provide necessary fundamental transformation formulas of theta functions and describe the action of $G_{\mathbb{A}+}$ on the quotient of two theta-constants.
\par

Let $n$ be a positive integer, $u\in\mathbb{C}^n$, $z\in\mathbb{H}_n$ and $r,s\in \mathbb{R}^n$. 
We define a (classical) \textit{theta function} by
\begin{equation*}
\T(u,z;r,s)=\sum_{x\in \mathbb{Z}^n}e\left(\frac{1}{2}\cdot{^t}(x+r)z(x+r)+{^t}(x+r)(u+s)\right).
\end{equation*}

\begin{proposition}\label{translation}
Let $r,s\in\mathbb{R}^n$ and $a,b\in\mathbb{Z}^n$.
\begin{itemize}
\item[\textup{(i)}]  $\T(-u,z;-r,-s)=\T(u,z;r,s)$.
\item[\textup{(ii)}] $\T(u,z;r+a,s+b)=e({^t}rb)\T(u,z;r,s)$.
\end{itemize}
\end{proposition}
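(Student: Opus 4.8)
The plan is to prove both identities by a suitable change of the summation index $x$ in the defining series for $\T$, exploiting the fact that the translations appearing here are by integer vectors and hence permute $\mathbb{Z}^n$. In neither case is an analytic argument needed: the series involved are the same series reindexed, so it suffices to match summands term by term.

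For part (i), I would write out $\T(-u,z;-r,-s)$ directly from the definition, so that each summand contains the quadratic argument $\frac{1}{2}\,{^t}(x-r)z(x-r)$ and the linear argument $-{^t}(x-r)(u+s)$, and then replace the summation variable $x$ by $-x$, which is a bijection of $\mathbb{Z}^n$ onto itself. Under this substitution the vector $-x-r$ becomes $-(x+r)$, whence the quadratic term picks up $(-1)^2=1$ and the linear term picks up two sign changes (one from the explicit minus sign on $u+s$ and one from reversing $x$) that cancel. The reindexed series is then identical, summand by summand, to the series defining $\T(u,z;r,s)$, which gives the claim.

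For part (ii), since $a\in\mathbb{Z}^n$ I would substitute $x\mapsto x-a$ in the series for $\T(u,z;r+a,s+b)$; again this merely permutes the index set $\mathbb{Z}^n$ and so reduces the problem to the case $a=0$. After this the quadratic argument returns to $\frac{1}{2}\,{^t}(x+r)z(x+r)$, while the linear argument becomes ${^t}(x+r)(u+s+b)={^t}(x+r)(u+s)+{^t}(x+r)b$. The one point that deserves attention is the resulting extra factor $e\bigl({^t}(x+r)b\bigr)$: writing ${^t}(x+r)b={^t}xb+{^t}rb$ and noting that ${^t}xb\in\mathbb{Z}$ because $x,b\in\mathbb{Z}^n$, we get $e({^t}xb)=1$, so the factor reduces to the $x$-independent constant $e({^t}rb)$ and pulls out of the sum, yielding $e({^t}rb)\,\T(u,z;r,s)$.

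I do not expect a genuine obstacle in either part: the entire content is the remark that integral translations act as permutations of $\mathbb{Z}^n$, together with the periodicity $e(m)=1$ for $m\in\mathbb{Z}$. The only place requiring a moment's care is the integrality of the cross term ${^t}xb$ in (ii), which is precisely what forces $a$ and $b$ to lie in $\mathbb{Z}^n$ for the stated transformation law to hold; this also explains why the analogous statement fails for non-integral shifts.
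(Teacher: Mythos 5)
Your proof is correct, and both reindexing computations are carried out accurately: the substitution $x\mapsto -x$ for (i), the substitution $x\mapsto x-a$ for (ii), and the key observation that $e({^t}xb)=1$ since ${^t}xb\in\mathbb{Z}$, which is exactly why the shift $b$ contributes only the constant factor $e({^t}rb)$. Note, however, that the paper does not prove this proposition at all; it simply cites Shimura (\emph{Theta functions with complex multiplication}, p.~676, formula (13)). So your argument is not a different route from the paper's so much as a self-contained replacement for a citation: you have written out the standard lattice-translation computation that underlies Shimura's formula. That is precisely the right argument for this statement, and your closing remark --- that integrality of $a$ and $b$ is what makes the substitution a permutation of $\mathbb{Z}^n$ and kills the cross term ${^t}xb$ --- correctly identifies why the hypothesis $a,b\in\mathbb{Z}^n$ is essential and why the transformation law fails for non-integral shifts.
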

\begin{proof}
\cite[p.676 (13)]{Shimura1}.
\end{proof}

For a square matrix $S$, by $\{S\}$ we mean the column vector whose components are the diagonal entries of $S$.
\begin{proposition}\label{transformation}
For every $\gamma= \left[\begin{matrix}
A&B\\
C&D
\end{matrix}\right]\in\G(1)$ such that $\{{^t}AC\}, \{{^t}BD\}\in 2\mathbb{Z}^n$, 
we get the transformation formula
\begin{equation*}
\begin{array}{ccl}
\T\big({^t}(Cz+D)^{-1}u,\gamma(z);r,s\big)&=&
\displaystyle\lambda_\gamma e\Big(\frac{{^t}rs-{^t}r's'}{2}\Big)\det(Cz+D)^{1/2}e\Big(\frac{1}{2}\cdot{^t}u(Cz+D)^{-1}cu  \Big)\times\vspace{0.1cm}\\
&&\T(u,z;r',s'),
\end{array}
\end{equation*}
where $\lambda_\gamma$ is a constant of absolute value $1$ depending only on $\gamma$ and the choice of the branch of 
$\det(Cz+D)^{1/2}$, and
\begin{equation*}
\left[\begin{matrix}
r'\\
s'
\end{matrix}\right]
={^t}\gamma
\left[\begin{matrix}
r\\
s
\end{matrix}\right].
\end{equation*}
In particular, $\lambda_\gamma^4=1$ for $\gamma\in\G(2)$.
\end{proposition}
\begin{proof}
\cite[Proposition 1.3 and Proposition 1.4]{Shimura1}.
\end{proof}
Here, the functions $\T(0,z;r,s)$ are called \textit{theta-constants}, and these are holomorphic on $\mathbb{H}_n$ as functions in $z$ (\cite[Proposition 1.6]{Shimura1}).
\begin{proposition}\label{theta-constant}
Suppose that $r,s$ belong to in $\mathbb{Q}^n$. 
Then the theta constant $\T(0,z;r,s)$ represents the zero function if and only if $r,s\in({1}/{2})\mathbb{Z}^n$ and $e(2\cdot{^t}rs)=-1$.
\end{proposition}
\begin{proof}
\cite[Theorem 2]{Igusa}.
\end{proof}
Let
\begin{equation*}
\Phi_{(r,s)}(z)=\frac{\T(0,z;r,s)}{\T(0,z;0,0)}.
\end{equation*}
Note that the poles of $\Phi_{(r,s)}(z)$ are exactly the zeros of $\T(0,z;0,0)=\sum_{x\in \mathbb{Z}^n}e\Big(\frac{1}{2}~{^t}xzx\Big)$.
When $n=1$, $\T(0,z;0,0)$ has no zero on $\mathbb{H}_1$ by the Jacobi's triple product identity \cite[Theorem 14.6]{Apostol}.
\begin{lemma}\label{theta}
For $r,s\in\mathbb{R}^n$ and $a,b\in\mathbb{Z}^n$, we achieve that
\begin{itemize}
\item[\textup{(i)}] $\Phi_{(-r,-s)}(z)=\Phi_{(r,s)}(z)$,
\item[\textup{(ii)}] $\Phi_{(r+a,s+b)}(z)=e({^t}rb)\Phi_{(r,s)}(z)$,
\item[\textup{(iii)}] If
$\gamma= \left[\begin{matrix}
A&B\\
C&D
\end{matrix}\right]\in\G(1)$ such that $\{{^t}AC\}, \{{^t}BD\}\in 2\mathbb{Z}^n$, then we obtain
\begin{equation*}
\Phi_{(r,s)}\big(\gamma(z)\big)=e\Big(\frac{{^t}rs-{^t}r's'}{2}\Big)\Phi_{(r',s')}(z),
\end{equation*}
where
\begin{equation*}
\left[\begin{matrix}
r'\\
s'
\end{matrix}\right]
={^t}\gamma
\left[\begin{matrix}
r\\
s
\end{matrix}\right].
\end{equation*}

\end{itemize}
\end{lemma}
\begin{proof}
It is immediate from Proposition \ref{translation} and Proposition \ref{transformation}.
\end{proof}

\begin{proposition}\label{phi-action}
Let $m$ be a positive integer and let $r,s\in ({1}/{m})\mathbb{Z}^n$. 
Then $\Phi_{(r,s)}(z)$ belongs to $\mathcal{F}_{2m^2}$.
Moreover, if $x$ is an element of $\Delta$ such that
\begin{equation*}
x_q\equiv \left[\begin{matrix}
1_n&0\\
0&t1_n
\end{matrix}\right]
\pmod{2 m^2 M_{2n}(\mathbb{Z}_q)}
\end{equation*}
for all rational primes $q$ and a positive integer $t$, then
\begin{equation*}
\Phi_{(r,s)}(z)^x=\Phi_{(r,ts)}(z).
\end{equation*}
\end{proposition}
\begin{proof}
\cite[Proposition 1.7]{Shimura1}.
\end{proof}

\begin{corollary}\label{action}
For $m\in \mathbb{Z}_{>0}$ and $r,s\in({1}/{m})\mathbb{Z}^n$, let
\begin{equation*}
y=\beta\left[\begin{matrix}1_n&0\\0&x\cdot 1_n\end{matrix}\right]\alpha\in G_{\mathbb{A}+}
\end{equation*}
with $\beta\in R_{2m^2}$, $x\in\displaystyle\prod_q\mathbb{Z}_q^\times$, and $\alpha\in G_{\mathbb{Q}+}$. Then 
\begin{equation*}
{(\Phi_{(r,s)})}^y(z)=\Phi_{(r,ts)}\big(\alpha(z)\big),
\end{equation*}
where $t$ is a positive integer such that $t\equiv x_q\pmod{2m^2\mathbb{Z}_q}$ for all rational primes $q$.
\end{corollary}
\begin{proof}
This can be proved by Proposition \ref{Siegel-action} and Proposition \ref{phi-action}.
\end{proof}

\section{Construction of class fields}\label{Construction of class fields}
We use the same notations as in Section \ref{Class fields over cyclotomic fields}.
Let $k=\mathbb{Q}(\Z)$ with $\Z=\Z_\ell$ and $n=({\ell-1})/{2}$ so that $2n=[k:\mathbb{Q}]$.
Let $v:k\rightarrow \mathbb{C}^n$ be the map given by $v(x)=
\left[\begin{matrix}
x^{\V_1}\\
\vdots\\
x^{\V_n}
\end{matrix}\right]$
, $L=v(\mathcal{O}_k)$ be a lattice in $\mathbb{C}^n$ and $\rho=({\Z-\Z^{-1}})/{\ell}\in k$. Then $\rho$ satisfies the conditions (i)$\sim$(iv) in \S\ref{Shimura's reciprocity law}.
And, we have an $\mathbb{R}$-bilinear form $E:\mathbb{C}^n\times\mathbb{C}^n\rightarrow\mathbb{R}$ defined by
\begin{equation*}
E(z,w)=\sum_{i=1}^{n} \rho^{\V_i}(z_i\overline{w_i}-\overline{z_i}w_i)~~\textrm{ for $z=\left[\begin{matrix}
z_1\\
\vdots\\
z_n
\end{matrix}\right]$,  $w=\left[\begin{matrix}
w_1\\
\vdots\\
w_n
\end{matrix}\right]$},
\end{equation*}
which induces a non-degenerate Riemann form on $\mathbb{C}^n/L$.
Let
\begin{displaymath}
e_i = \left\{ \begin{array}{ll}
\Z^{2i}~ & \textrm{for $1\leq i\leq n$}\\
\displaystyle\sum_{j=1}^{i-n}\Z^{2j-1}~ & \textrm{for $n+1\leq i\leq 2n$}.
\end{array} \right.
\end{displaymath}
Since $\{e_1,e_2,\ldots,e_{2n} \}$ is a free $\mathbb{Z}$-basis of $\mathcal{O}_k$, $\{v(e_1),v(e_2),\ldots,v(e_{2n})\}$ is a free $\mathbb{Z}$-basis of the lattice $L$, and we get
\begin{displaymath}
\Big[E\big(v(e_i),v(e_j)\big)\Big]_{1\leq i,j\leq 2n}=J.
\end{displaymath}
Now, let
\begin{displaymath}
\Omega=\big[v(e_1)~~v(e_2)~~\cdots~~v(e_{2n})\big]\in M_{n\times 2n}(\mathbb{C}).
\end{displaymath}
Then $\Omega$ satisfies
\begin{eqnarray*}
L&=&\Big\{\Omega\left[\begin{matrix} a\\b \end{matrix}\right]~\big|~a,b\in\mathbb{Z}^n \Big\},\\
E(\Omega x, \Omega y)&=&{^t}xJy ~~~\textrm{for $x,y\in\mathbb{R}^{2n}$}
\end{eqnarray*}
because $E$ is $\mathbb{R}$-bilinear. 
Thus $\delta=1$ and $\epsilon=1_n$ in \S\ref{Shimura's reciprocity law}.
Write $\Omega=[\Omega_1~~\Omega_2]$ with $\Omega_1,\Omega_2\in M_n(\mathbb{C})$ and put $z_\ell=\Omega_2^{-1}\Omega_1\in\mathbb{H}_n$. 
We define a ring monomorphism $h$ of $k_{\mathbb{A}}$ into $M_{2n}(\mathbb{Q}_\mathbb{A})$ as in \S \ref{Shimura's reciprocity law}.
Then $z_\ell$ is the CM-point of $\mathbb{H}_n$ induced from $h$ corresponding to the polarized abelian variety $(\mathbb{C}^n/L, E)$.
\par
Let $p$ be an odd prime and $r,s\in\mathbb{Q}^n$. 
We denote by $\mathbf{h}$ the set of all non-archimedean primes of $k$ and by $\mathbf{a}$ the set of all archimedean primes of $k$. 
For given $\O\in\mathcal{O}_k$ prime to $2p$, we set
\begin{equation*}
\widetilde{\O}=\prod_{\substack{v\in\mathbf{h} \\v\,|\,2p }}(\O^{-1})_v\times\prod_{\substack{v\in\mathbf{h} \\v\,\nmid\, 2p }}1_v \times\prod_{v\in\mathbf{a}}1_v \in k_\mathbb{A}^\times.
\end{equation*}
Here $x_v$ is the $v$-component of $x\in k_\mathbb{A}^\times$.
If $\Phi_{(r,s)}$ is finite at $z_\ell$ then by Proposition \ref{reciprocity} and \cite[Chapter 8 $\S$4]{Lang}, we have
\begin{equation*}
\Phi_{(r,s)}(z_\ell)^{\big(\frac{k'/k}{(\O)}\big)}=\Phi_{(r,s)}(z_\ell)^{[\widetilde{\O},k]}={(\Phi_{(r,s)})}^{h(\V^*(\widetilde{\O}^{-1}))}(z_\ell)~~\textrm{for $\O\in\mathcal{O}_k$},
\end{equation*}
where $k'$ is a finite abelian extension of $k$ containing $\Phi_{(r,s)}(z_\ell)$.

\begin{lemma}\label{phi-class}
Let $p$ be an odd prime, $\mu\in\mathbb{Z}_{>0}$, $r,s\in({1}/{p^\mu})\mathbb{Z}^n$ and $z_\ell$ be as above. 
Assume that $z_\ell$ is not a zero of $\T(0,z;0,0)$.
If $p\nmid \ell h_\ell^+ n$, then $\Phi_{(r,s)}(z_\ell)^{p^\alpha}\in K_{2\mu-1-\alpha}$ for $\alpha=0,1,\ldots, \mu$.
\end{lemma}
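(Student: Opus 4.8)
The plan is to combine Shimura's reciprocity law (Proposition \ref{reciprocity}) with the explicit action formula of Corollary \ref{action}, and then to analyze which automorphisms of $k_{\mu+1}/k$ fix the value $\Phi_{(r,s)}(z_\ell)^{p^\alpha}$ by matching them against the description of $K_\nu$ via $\ker(\widetilde{\V^*_\nu})$ obtained in Section 5. First I would observe that since $r,s\in\frac{1}{p^\mu}\mathbb{Z}^n$, Proposition \ref{phi-action} places $\Phi_{(r,s)}(z)$ (here with the trivial dilation, so $z_\ell$ directly) in $\mathfrak{A}_0\big(\G(2p^{2\mu}),\mathbb{Q}_{ab}\big)$; because the CM-point $z_\ell$ here has $\delta=1$ and $\epsilon=1_n$ (established at the start of Section 6), Shimura's reciprocity law applies and the value $\Phi_{(r,s)}(z_\ell)$ lies in the class field $\widetilde{k_{ab}}$ of Remark \ref{class field}. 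The key computation is that for $\O\in\mathcal{O}_k$ prime to $2p$, the displayed formula just before the lemma gives
\begin{equation*}
\Phi_{(r,s)}(z_\ell)^{\big(\frac{k'/k}{(\O)}\big)}={(\Phi_{(r,s)})}^{h(\V^*(\widetilde{\O}^{-1}))}(z_\ell),
\end{equation*}
and I would feed $h(\V^*(\widetilde{\O}^{-1}))$ through the decomposition $G_{\mathbb{A}+}=R_N\vartriangle G_{\mathbb{Q}+}$ to read off, via Corollary \ref{action}, that the Galois action multiplies the second parameter $s$ by a scalar $t$ determined by the image of $\V^*(\O)$ modulo $2p^{2\mu}$.

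Next I would track the effect of raising to the power $p^\alpha$. Raising $\Phi_{(r,s)}(z_\ell)$ to the $p^\alpha$ should, after using Lemma \ref{theta}(ii) to renormalize the parameters, correspond to replacing $s$ by $s$ with denominator reduced by a factor $p^\alpha$, so that $\Phi_{(r,s)}(z_\ell)^{p^\alpha}$ lives in a field of conductor $2p^{2\mu-\alpha}$ rather than $2p^{2\mu}$; more precisely the relevant congruence condition on the acting idele drops from modulus $2p^{2\mu}$ to modulus $2p^{2\mu-\alpha}$, which is why the index in the conclusion is $2\mu-1-\alpha$ rather than $2\mu$. The mechanism is that an automorphism $\big(\frac{k'/k}{(\O)}\big)$ fixes $\Phi_{(r,s)}(z_\ell)^{p^\alpha}$ precisely when the scalar $t$ attached to $\V^*(\O)$ is $\equiv 1 \pmod{p^{2\mu-\alpha}}$ in the relevant component, i.e.\ when $\V^*(\O)\equiv 1$ to the appropriate precision, and this is exactly the kernel condition $\O H_{2\mu-1-\alpha}\in\ker(\widetilde{\V^*_{2\mu-1-\alpha}})$ defining $K_{2\mu-1-\alpha}$ in Section 5. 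The hypothesis $p\nmid \ell h_\ell^+ n$ enters through Lemma \ref{unit}, which guarantees that $H_\nu/S_{\nu+1}$ is generated by real units; this is what lets me identify the fixed field of the stabilizer with $K_{2\mu-1-\alpha}$ rather than some intermediate field, since the real-unit structure controls the relation between $S_\nu$ and $H_\nu$ that enters the endomorphism $\widetilde{\V^*_\nu}$.

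The heart of the argument is therefore a bookkeeping lemma: for $\O$ prime to $2p$ with $\O\equiv 1\pmod{2p^{2\mu-1-\alpha}}$ (representing a class in $S_{2\mu-1-\alpha}$), I must show that the associated scalar $t$ acting on the $s$-parameter of $\Phi_{(r,s)}^{p^\alpha}$ is trivial modulo the precision that $\Phi_{(r,s)}(z_\ell)^{p^\alpha}$ can detect, so that such $\O$ fixes the value; and conversely that the value is moved by elements outside $S_{\mu+1}(S_\nu\cap E)$ in a way compatible with $\ker(\widetilde{\V^*_\nu})$. I would verify this by chasing the explicit formula $\V^*(a)=\prod_i a^{\V_i^{-1}}$ together with the relation $\V^*(a)\cdot\overline{\V^*(a)}=N_{k/\mathbb{Q}}(a)$, which forces the scalar $t=\nu\big(h(\V^*(\widetilde{\O}^{-1}))\big)$ to be controlled by $N_{k/\mathbb{Q}}(\O)$, and this is where the factor $p^\alpha$ in the exponent buys exactly one step of precision at a time.

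The main obstacle I anticipate is the precise propagation of $p$-adic precision through the $p^\alpha$-th power: I need to confirm that raising to $p^\alpha$ genuinely descends the conductor by exactly $\alpha$ (yielding $K_{2\mu-1-\alpha}$ and not an off-by-one neighbor), which requires careful attention to how the scalar $t$ reducing $s$ interacts with the theta-transformation $\Phi_{(r,ts)}(z_\ell)=\Phi_{(r,s)}(z_\ell)$ when $t\equiv 1$ to the relevant modulus, and to the additive-versus-multiplicative bookkeeping between $S_\nu/S_{\nu+1}\cong(\mathbb{Z}/p\mathbb{Z})^{2n}$ and the $p$-power filtration. Getting the indices to line up as $2\mu-1-\alpha$ for $\alpha=0,1,\ldots,\mu$ is the delicate part; the algebraic identification of the stabilizer with $\ker(\widetilde{\V^*_\nu})$ is then routine once the correct modulus is pinned down.
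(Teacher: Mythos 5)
Your skeleton matches the paper's proof (Shimura reciprocity, the decomposition $G_{\mathbb{A}+}=R_{2p^{2\mu}}\vartriangle G_{\mathbb{Q}+}$ via strong approximation, Corollary \ref{action}, then theta-translation bookkeeping), but the step you call the ``heart of the argument'' is miscast in a way that would make the proof fail. You propose to show that \emph{every} $\O\equiv 1\pmod{2p^{2\mu-1-\alpha}}$, i.e.\ every class in $S_{2\mu-1-\alpha}$, fixes $\Phi_{(r,s)}(z_\ell)^{p^\alpha}$. That is false and proves too much: it would place the value in the ray class field $k_{2\mu-1-\alpha}$ itself, whereas by Theorem \ref{generator} this value generates the generally nontrivial extension $K_{2\mu-1-\alpha}/k_{2\mu-1-\alpha}$; indeed formula (\ref{formula}) of the paper shows that the generators $\O_{2\mu-1-\alpha,j}$ of $S_{2\mu-1-\alpha}$ act on the $p^\alpha$-th power by $p$-th roots of unity which are nontrivial in general. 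One must establish the fixing property only for $\O$ with $\O H_{2\mu-1-\alpha}\in\ker(\widetilde{\V^*_{2\mu-1-\alpha}})$, and here lies the idea you elide: the kernel condition says $\V^*(\O)\in H_{2\mu-1-\alpha}=S_{2\mu-\alpha}(S_{2\mu-1-\alpha}\cap E)$, i.e.\ congruence to $1$ only \emph{up to a unit factor}, not the congruence $\V^*(\O)\equiv 1\pmod{2p^{2\mu-\alpha}}$ that the computation actually requires. Eliminating that unit factor is exactly where $p\nmid \ell h_\ell^+ n$ enters: the paper invokes the proof of Theorem \ref{dimension} to get $\big(\V^*(S_{2\mu-1-\alpha})\cap H_{2\mu-1-\alpha}\big)/S_{2\mu-\alpha}=\{0\}$, and that argument needs Lemma \ref{unit} \emph{together with} the norm computation using $p\nmid n$ and the rank analysis of the matrix $\mathbf{M}$; Lemma \ref{unit} alone, which is what you cite, does not suffice.

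A second distortion: the Galois action is not governed by the similitude scalar $t$ alone. After strong approximation writes $h\big(\V^*(\O)\big)\equiv\begin{pmatrix}1_n&0\\0&v\cdot 1_n\end{pmatrix}\beta\pmod{2p^{2\mu}}$ with $\beta\in\G(2)$, Lemma \ref{theta}(iii) converts $\Phi_{(r,vs)}\big(\beta(z_\ell)\big)$ into $e(\cdot)\,\Phi_{(r',s')}(z_\ell)$ with $\begin{pmatrix}r'\\s'\end{pmatrix}\equiv{^t}h\big(\V^*(\O)\big)\begin{pmatrix}r\\s\end{pmatrix}$, so \emph{both} parameters move; the needed control is on the full matrix $h\big(\V^*(\O)\big)$ modulo $2p^{2\mu-\alpha}$ --- equivalently on $\V^*(\O)$ as an element of $\mathcal{O}_k$ --- and not merely on $t=N_{k/\mathbb{Q}}(\O)$. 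Relatedly, your identity $\Phi_{(r,ts)}(z_\ell)=\Phi_{(r,s)}(z_\ell)$ for $t\equiv 1$ is false as stated: by Lemma \ref{theta}(ii) translation produces factors such as $e\big({^t}rb\big)$, and these roots of unity (here of order dividing $p^\alpha$) are precisely what raising to the $p^\alpha$-th power is designed to kill. So while your plan points in the right direction, without restricting to kernel classes and without the unit-elimination step it does not yield the lemma.
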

\begin{proof}
By Proposition \ref{phi-action} and \cite[p.682]{Shimura1}, $\Phi_{(r,s)}(z)$ belongs to $\mathcal{F}_{2p^{2\mu}}$ as a function, so it is $R_{2p^{2\mu}}$-invariant.
Let $\O H_{2\mu-1-\alpha}$ belong to $\mathrm{ker}(\widetilde{\V^*_{2\mu-1-\alpha}})$ such that $\V^*(\O) \in H_{2\mu-1-\alpha}$. 
Since $p\nmid \ell h_\ell^+ n$, it follows from the proof of Theorem \ref{dimension} that 
$(\V^*(S_{2\mu-1-\alpha})\cap H_{2\mu-1-\alpha})/S_{2\mu-\alpha} =\{0\}$.
Hence $\V^*(\O) \in S_{2\mu-\alpha}$.
Write $\V^*(\O)=1+2p^{2\mu-\alpha}\O_0$ with $\O_0\in\mathcal{O}_k$.
Then it suffices to show $\big(\Phi_{(r,s)}(z_\ell)^{p^\alpha}\big)^{\big(\frac{k'/k}{(\O)}\big)}=\Phi_{(r,s)}(z_\ell)^{p^\alpha}$, where $k'$ is a finite abelian extension of $k$ containing $\Phi_{(r,s)}(z_\ell)$.
By the strong approximation theorem for $\mathrm{Sp}(n)$ there exists a matrix $\beta\in\G(1)$ such that
\begin{equation*}
h\big(\V^*(\O)\big)\equiv
\left[\begin{matrix}
1_n&0\\
0&v\cdot1_n
\end{matrix}\right]
\beta\pmod{2p^{2\mu}},
\end{equation*}
where $v:=\nu\big(h(\V^*(\O))\big)=N_{k/\mathbb{Q}}(\O)=\V^*(\O)\cdot\overline{\V^*(\O)}= 1+2p^{2\mu-\alpha}v_0$ for some $v_0\in\mathbb{Z}$.
In fact, $\beta$ belongs to $\G(2)$ owing to the facts 
$h\big(\V^*(\O)\big)\equiv 1_{2n}\pmod{2}$ and $v\equiv 1\pmod{2}$.
Thus for all rational primes $q$ we obtain
\begin{equation*}
h\big(\V^*(\widetilde{\O}^{-1})\big)_q\equiv 
\left[\begin{matrix}
1_n&0\\
0&v\cdot 1_n
\end{matrix}\right]
\beta \pmod{2p^{2\mu} M_{2n}(\mathbb{Z}_q)}.
\end{equation*}
And, by Lemma \ref{theta} and Corollary \ref{action} we get that
\begin{eqnarray*}
{(\Phi_{(r,s)})}^{h(\V^*(\widetilde{\O}^{-1}))}(z_\ell)&=&\Phi_{(r,vs)}(\beta z_\ell)\\
&=&e\Big(\frac{{^t}rvs-{^t}r's'}{2}\Big)\Phi_{(r',s')}(z_\ell),
\end{eqnarray*}
where 
\begin{eqnarray*}
\left[\begin{matrix}
r'\\s'
\end{matrix}\right]
={^t}\beta\left[\begin{matrix}r\\vs\end{matrix}\right]
&\equiv& {^t}h\big(\V^*(\O)\big)\left[\begin{matrix}r\\s\end{matrix}\right]\pmod{2p^{2\mu}}\\
&\equiv&\left[\begin{matrix}r\\s\end{matrix}\right]+2p^{2\mu-\alpha}\cdot{^t}h(\O_0)\left[\begin{matrix}r\\s\end{matrix}\right]
\pmod{2p^{2\mu}}.
\end{eqnarray*}
Let $a,b\in 2p^{\mu-\alpha}\mathbb{Z}^n\subset 2\mathbb{Z}^n$ such that $\left[\begin{matrix}
a\\
b
\end{matrix}\right]
=2p^{2\mu-\alpha}\cdot{^t}h(\O_0)\left[\begin{matrix}r\\s\end{matrix}\right]$. 
Then we derive by Lemma \ref{theta} and Proposition \ref{reciprocity} that
\begin{equation*}
\begin{array}{lll}
\big(\Phi_{(r,s)}(z_\ell)^{p^\alpha}\big)^{\big(\frac{k'/k}{(\O)}\big)}&=&\big({\Phi_{(r,s)}}^{h(\V^*(\widetilde{\O}^{-1}))}(z_\ell)\big)^{p^\alpha}\vspace{0.1cm}\\
&=&\displaystyle\Big(e\Big(\frac{{^t}rvs-{^t}(r+a)(s+b)}{2}\Big)e({^t}rb)\Phi_{(r,s)}(z_\ell)\Big)^{p^\alpha}\vspace{0.1cm}\\
&=&\displaystyle e(p^{2\mu}v_0\cdot{^t}rs)e\Big(p^\alpha\cdot\frac{{^t}rb-{^t}as}{2}\Big)\Phi_{(r,s)}(z_\ell)^{p^\alpha}\vspace{0.1cm}\\
&=&\Phi_{(r,s)}(z_\ell)^{p^\alpha}.
\end{array}
\end{equation*}
\end{proof}

Let $\mu\in\mathbb{Z}_{>0}$. Assume that $r,s\in({1}/{p^\mu})\mathbb{Z}^n$ and $z_\ell$ is not a zero of $\T(0,z;0,0)$.
Consider the matrices $h\big(\V^*(\O_{2\mu-1-\alpha,j})\big)$ for $1\leq j \leq n+1$ and $0\leq\alpha\leq \mu-1$. 
Then we have
\begin{eqnarray*}
h\big(\V^*(\O_{2\mu-1-\alpha,j})\big)&=&h\big(1+2p^{2\mu-1-\alpha}\V^+(\Z^j)+2p^{2\mu-\alpha}\O_0\big)\\ &=&1_{2n}+2p^{2\mu-1-\alpha}h\big(\V^+(\Z^j)\big)+2p^{2\mu-\alpha}h(\O_0)
\end{eqnarray*}
for some $\O_0\in\mathcal{O}_k$.
Also, we can deduce without difficulty
\begin{equation*}
h(\Z)=
\left[ \begin{array}{ccccc|ccccc}
0&0 &\cdots &\cdots &0 &-1&1& & & \\
\vdots&\vdots& & &\vdots&&-1 &1 & &  \\
\vdots&\vdots& & &\vdots& &&\ddots &\ddots &\\
0&0 &\cdots &\cdots &0& && &-1 &1\\
-1&-1&\cdots&\cdots&-1& & & & &-1\\
\hline
1 & & &  &&0 &0&\cdots &\cdots &0\\
1&1& &  &&\vdots&\vdots&& &\vdots \\
1 &1&1&& &\vdots&\vdots& &&\vdots \\
\vdots&\vdots& &\ddots & &\vdots&\vdots& & &\vdots \\
1&1&1&\cdots&1&0&0&\cdots&\cdots&0
\end{array} \right].
\end{equation*}
Now, again by the strong approximation theorem for $\mathrm{Sp}(n)$ there exists a matrix $\beta_{2\mu-1-\alpha,j}$ in $\G(1)$ such that
\begin{equation*}
h\big(\V^*(\O_{2\mu-1-\alpha,j})\big)\equiv
\left[\begin{matrix}
1_n&0\\
0&v_{2\mu-1-\alpha,j}\cdot 1_n
\end{matrix}\right]
\beta_{2\mu-1-\alpha,j}\pmod{2p^{2\mu}},
\end{equation*}
where $v_{2\mu-1-\alpha,j}:=\nu\big(h(\V^*(\O_{2\mu-1-\alpha,j}))\big)=N_{k/\mathbb{Q}}(\O_{2\mu-1-\alpha,j})= 1-2p^{2\mu-1-\alpha}+2p^{2\mu-\alpha}v_j$ for some $v_j\in\mathbb{Z}$.
As a matter of fact, $\beta_{2\mu-1-\alpha,j}$ belongs to $\G(2)$ due to the facts 
$h\big(\V^*(\O_{2\mu-1-\alpha,j})\big)\equiv 1_{2n}\pmod{2}$ and $v_{2\mu-1-\alpha,j}\equiv 1\pmod{2}$.
And for all rational primes $q$ we obtain
\begin{equation*}
h\big(\V^*(\widetilde{\O}_{2\mu-1-\alpha,j}^{-1})\big)_q\equiv 
\left[\begin{matrix}
1_n&0\\
0&v_{2\mu-1-\alpha,j}\cdot 1_n
\end{matrix}\right]
\beta_{2\mu-1-\alpha,j} \pmod{2p^{2\mu} M_{2n}(\mathbb{Z}_q)}.
\end{equation*}
By Lemma \ref{theta} and Corollary \ref{action} we obtain
\begin{eqnarray*}
{\Phi_{(r,s)}}^{h(\V^*(\widetilde{\O}_{2\mu-1-\alpha,j}^{-1}))}(z_\ell)&=&\Phi_{(r,v_{2\mu-1-\alpha,j}s)}(\beta_{2\mu-1-\alpha,j}(z_\ell))\\
&=&e\Big(\frac{{^t}rv_{2\mu-1-\alpha,j}s-{^t}r's'}{2}\Big)\Phi_{(r',s')}(z_\ell),
\end{eqnarray*}
where 
\begin{eqnarray*}
\left[\begin{matrix}
r'\\s'
\end{matrix}\right]
&=&{^t}\beta_{2\mu-1-\alpha,j}\left[\begin{matrix}r\\v_{2\mu-1-\alpha,j}s\end{matrix}\right]\\
&\equiv& {^t}h\big(\V^*(\O_{2\mu-1-\alpha,j})\big)\left[\begin{matrix}r\\s\end{matrix}\right]\pmod{2p^{2\mu}}\\
&\equiv&\left[\begin{matrix}r\\s\end{matrix}\right]+2p^{2\mu-1-\alpha}\cdot{^t}h\big(\V^+(\Z^j)\big)\left[\begin{matrix}r\\s\end{matrix}\right]
+2p^{2\mu-\alpha}\cdot{^t}h(\O_0)\left[\begin{matrix}r\\s\end{matrix}\right]
\pmod{2p^{2\mu}}.
\end{eqnarray*}
For each $j$, let $a_{2\mu-1-\alpha,j},b_{2\mu-1-\alpha,j}\in 2p^{\mu-1-\alpha}\mathbb{Z}^n\subset 2\mathbb{Z}^n$ such that 
\begin{equation*}
\left[\begin{matrix}
a_{2\mu-1-\alpha,j}\\
b_{2\mu-1-\alpha,j}
\end{matrix}\right]
=2p^{2\mu-1-\alpha}\cdot{^t}h\big(\V^+(\Z^j)\big)\left[\begin{matrix}r\\s\end{matrix}\right].
\end{equation*} 
And, let $c, d\in 2p^{\mu-\alpha}\mathbb{Z}^n\subset 2\mathbb{Z}^n$ for which 
$\left[\begin{matrix}
c\\
d
\end{matrix}\right]
=2p^{2\mu-\alpha}\cdot{^t}h(\O_0)\left[\begin{matrix}r\\s\end{matrix}\right]$.
Then by Lemma \ref{theta} and Proposition \ref{reciprocity} we derive that
\begin{equation}\label{formula}
\begin{array}{ccl}
\big(\Phi_{(r,s)}(z_\ell)^{p^\alpha}\big)^{\big(\frac{K_{2\mu-1-\alpha}/k}{(\O_{2\mu-1-\alpha,j})}\big)}
&=&\big({\Phi_{(r,s)}}^{h(\V^*(\widetilde{\O}_{2\mu-1-\alpha,j}^{-1}))}(z_\ell)\big)^{p^\alpha}\vspace{0.1cm}\\
&=&\displaystyle e\big(p^\alpha\frac{{^t}rv_{2\mu-1-\alpha,j}s-{^t}(r+a_{2\mu-1-\alpha,j}+c)(s+b_{2\mu-1-\alpha,j}+d)}{2}\big)\times
\vspace{0.1cm}\\
&&e\big(p^\alpha\cdot{^t}r(b_{2\mu-1-\alpha,j}+d)\big)\Phi_{(r,s)}(z_\ell)^{p^\alpha}\vspace{0.1cm}\\
&=&\displaystyle e(-p^{2\mu-1}\cdot{^t}rs)e\Big(p^\alpha\frac{{^t}rb_{2\mu-1-\alpha,j}-{^t}a_{2\mu-1-\alpha,j}s}{2}\Big)\Phi_{(r,s)}(z_\ell)^{p^\alpha}\vspace{0.1cm}\\
&=&\underbrace{e(-p^{2\mu-1}\cdot{^t}rs)e\Big(\frac{{^t}rb_{2\mu-1,j}-{^t}a_{2\mu-1,j}s}{2}\Big)}_{\textrm{$p$-th root of unity}}\Phi_{(r,s)}(z_\ell)^{p^\alpha}.
\end{array}
\end{equation}
\par

For given $r_i, s_i\in \mathbb{Z}^n$ with $1\leq i \leq n+1$, let $\mathbf{r}=[r_i]_{1\leq i \leq n+1}$ and $\mathbf{s}=[s_i]_{1\leq i \leq n+1}$.
And, we set  $\Phi_{[\mathbf{r},\mathbf{s};\mu,i]}(z)_p=\Phi_{(r_i/p^\mu,s_i/p^\mu)}(z)$ for each $\mu\in\mathbb{Z}_{>0}$.
\begin{lemma}\label{depend}
Let $\mathbf{r}=[r_i]_{1\leq i \leq n+1}$ and $\mathbf{s}=[s_i]_{1\leq i \leq n+1}$ for given $r_i, s_i\in\mathbb{Z}^n$.
Then for each $1\leq i,j\leq n+1$, there exists an integer $c_{ij}$ such that 
\begin{equation*}
\Phi_{[\mathbf{r},\mathbf{s};1,i]}(z_\ell)_p^{\big(\frac{K_1/k}{(\O_{1,j})}\big)}=\Z_p^{c_{ij}}\Phi_{[\mathbf{r},\mathbf{s};1,i]}(z_\ell)_p
\end{equation*}
for all odd primes $p$.
\end{lemma}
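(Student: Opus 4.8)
The plan is to obtain the assertion as the special case $\mu=1$, $\alpha=0$ of the computation (\ref{formula}), and then to verify that the exponent of the root of unity $\Z_p$ that arises is in fact independent of $p$. First I would invoke Lemma \ref{phi-class} with $\mu=1$ and $\alpha=0$: since $2\mu-1-\alpha=1$, the value $\Phi_{[\mathbf{r},\mathbf{s};1,i]}(z_\ell)_p=\Phi_{(\frac{1}{p}r_i,\frac{1}{p}s_i)}(z_\ell)$ lies in $K_1$, so the Artin symbol $\big(\frac{K_1/k}{(\O_{1,j})}\big)$ genuinely acts on it (the underlying root-of-unity identity follows from Shimura's reciprocity law, Proposition \ref{reciprocity}, and so persists uniformly in $p$). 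Applying (\ref{formula}) with $r=\frac{1}{p}r_i$ and $s=\frac{1}{p}s_i$ then gives
\[
\Phi_{[\mathbf{r},\mathbf{s};1,i]}(z_\ell)_p^{\big(\frac{K_1/k}{(\O_{1,j})}\big)}
=e(-p\cdot{}^t rs)\,e\Big(\tfrac{{}^t rb_{1,j}-{}^t a_{1,j}s}{2}\Big)\,\Phi_{[\mathbf{r},\mathbf{s};1,i]}(z_\ell)_p,
\]
where $\begin{pmatrix}a_{1,j}\\ b_{1,j}\end{pmatrix}=2p\cdot{}^t h\big(\V^+(\Z^j)\big)\begin{pmatrix}r\\ s\end{pmatrix}$.

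The crucial point is the cancellation of every power of $p$. Substituting $r=\frac{1}{p}r_i$ and $s=\frac{1}{p}s_i$ into the offset vectors, the factor $2p$ cancels the $\frac{1}{p}$ and yields $\begin{pmatrix}a_{1,j}\\ b_{1,j}\end{pmatrix}=2\cdot{}^t h\big(\V^+(\Z^j)\big)\begin{pmatrix}r_i\\ s_i\end{pmatrix}\in 2\mathbb{Z}^{2n}$, a vector that no longer involves $p$ because $r_i,s_i\in\mathbb{Z}^n$ and $h\big(\V^+(\Z^j)\big)$ is an integral matrix. Writing $\begin{pmatrix}a'_{ij}\\ b'_{ij}\end{pmatrix}={}^t h\big(\V^+(\Z^j)\big)\begin{pmatrix}r_i\\ s_i\end{pmatrix}\in\mathbb{Z}^{2n}$, the first factor becomes $e\big(-p\cdot\frac{1}{p^2}{}^t r_is_i\big)=\Z_p^{-{}^t r_is_i}$ and the second becomes $e\big(\frac{1}{p}({}^t r_ib'_{ij}-{}^t a'_{ij}s_i)\big)=\Z_p^{{}^t r_ib'_{ij}-{}^t a'_{ij}s_i}$. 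Hence the Galois action multiplies $\Phi_{[\mathbf{r},\mathbf{s};1,i]}(z_\ell)_p$ by $\Z_p^{a_{ij}}$ with
\[
a_{ij}=-{}^t r_is_i+{}^t r_ib'_{ij}-{}^t a'_{ij}s_i .
\]

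Finally I would emphasize that $r_i$, $s_i$ and the integral matrix $h\big(\V^+(\Z^j)\big)$ are all fixed once $\mathbf{r}$, $\mathbf{s}$ and the index $j$ are chosen, so the integer $a_{ij}$ is one and the same for every odd prime $p$, which is precisely the assertion. I do not anticipate any genuine difficulty beyond careful bookkeeping of the $p$-powers: the only thing to check is that the factor $2p^{2\mu-1-\alpha}=2p$ in the offset vectors, and the factor $p^{2\mu-1}=p$ in $e(-p\cdot{}^t rs)$, each cancel exactly against the $\frac{1}{p}$ and $\frac{1}{p^2}$ scalings coming from $r,s\in\frac{1}{p}\mathbb{Z}^n$, so that the total exponent of $\Z_p$ remains an integer depending only on $\mathbf{r},\mathbf{s}$ and $j$. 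The one point meriting a word of caution is that $\Phi_{[\mathbf{r},\mathbf{s};1,i]}(z_\ell)_p\in K_1$ is guaranteed by Lemma \ref{phi-class} only when $p\nmid \ell h_\ell^+ n$, but the displayed identity itself is an equality of values in $k_{ab}$ supplied by Proposition \ref{reciprocity}, and so holds for all odd primes $p$.
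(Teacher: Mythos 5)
Your proposal is correct and takes essentially the same route as the paper: both substitute $r=\frac{1}{p}r_i$, $s=\frac{1}{p}s_i$ into formula (\ref{formula}) with $\mu=1$, $\alpha=0$, note that the offset vectors become $2\cdot{}^t h\big(\V^+(\Z^j)\big)\begin{pmatrix}r_i\\ s_i\end{pmatrix}$, which is independent of $p$, and conclude that all powers of $p$ cancel, leaving the integer exponent $a_{ij}=-{}^t r_i s_i+\frac{1}{2}\big({}^t r_i b_{1,j}-{}^t a_{1,j}s_i\big)$. Your vectors $a'_{ij},b'_{ij}$ are simply half of the paper's $a_{1,j},b_{1,j}$, so the two computations coincide exactly.
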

\begin{proof}
Substituting $r=r_i/p$, $s=s_i/p$ into the formula (\ref{formula}) we get
\begin{equation*}
\begin{array}{lll}
\Phi_{[\mathbf{r},\mathbf{s};1,i]}(z_\ell)_p^{\big(\frac{K_1/k}{(\O_{1,j})}\big)}&=&\displaystyle e\Big(-\frac{{^t}r_i s_i}{p}\Big)e\Big(\frac{{^t}r_i b_{1,j}-{^t}a_{1,j}s_i}{2p}\Big)\Phi_{[\mathbf{r},\mathbf{s};1,i]}(z_\ell)_p\vspace{0.1cm}\\
&=&\displaystyle\Z_p^{-{^t}r_i s_i+({^t}r_i b_{1,j}-{^t}a_{1,j}s_i)/2}\Phi_{[\mathbf{r},\mathbf{s};1,i]}(z_\ell)_p,
\end{array}
\end{equation*}
where $a_{1,j},b_{1,j}\in 2\mathbb{Z}^n$ such that $\left[\begin{matrix}
a_{1,j}\\
b_{1,j}
\end{matrix}\right]
=2\cdot{^t}h\big(\V^+(\Z^j)\big)\left[\begin{matrix}r_i\\s_i\end{matrix}\right]$.
Hence 
\begin{equation*}
c_{ij}:=-{^t}r_i s_i+\frac{1}{2}({^t}r_i b_{1,j}-{^t}a_{1,j}s_i)
\end{equation*}
is an integer which does not depend on $p$.
\end{proof}
We put $A_\ell(\mathbf{r},\mathbf{s})=[c_{ij}]_{1\leq i,j\leq n+1}\in M_{n+1}(\mathbb{Z})$ where $c_{ij}$ is an integer satisfying 
\begin{equation*}
\Phi_{[\mathbf{r},\mathbf{s};1,i]}(z_\ell)_p^{\big(\frac{K_1/k}{(\O_{1,j})}\big)}=\Z_p^{c_{ij}}\Phi_{[\mathbf{r},\mathbf{s};1,i]}(z_\ell)_p
\end{equation*}
for all odd primes $p$.
Note that $A_\ell(\mathbf{r},\mathbf{s})$ does not depend on $p$ by lemma \ref{depend}.
\par
Now, in order to construct a primitive generator of $K_\mu$ over $k_\mu$, we need the following lemma.

\begin{lemma}\label{primitive}
Let $L$ be an abelian extension of a number field $F$.
Suppose that $L=F(\alpha, \beta)$ for some $\alpha, \beta\in L$.
Let $a, b$ be any nonzero elements of $F$ and $\nu=[L : F(\alpha)]$.
Then we have
\begin{equation*}
L=F\big(a\alpha+b(\nu\beta-\mathrm{Tr}_{L/F(\alpha)}(\beta))\big).
\end{equation*}
\end{lemma}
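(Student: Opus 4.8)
The plan is to argue via the Galois correspondence. Since $L/F$ is abelian it is in particular Galois; write $G=\mathrm{Gal}(L/F)$ and $H=\mathrm{Gal}(L/F(\alpha))$, so that $|H|=\nu$ and every element of $H$ fixes $\alpha$. Put $\theta=a\alpha+b\gamma$ with $\gamma=\nu\beta-\mathrm{Tr}_{L/F(\alpha)}(\beta)$. Because $L/F$ is Galois, $F(\theta)=L$ holds if and only if the stabilizer $\{\tau\in G:\tau(\theta)=\theta\}$ is trivial. Thus I would fix an arbitrary $\tau\in G$ with $\tau(\theta)=\theta$ and aim to conclude $\tau=1$.

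The decisive observation is that $\gamma$ has trace zero from $L$ down to $F(\alpha)$: writing $t=\mathrm{Tr}_{L/F(\alpha)}(\beta)\in F(\alpha)$, one has $\mathrm{Tr}_{L/F(\alpha)}(\gamma)=\nu t-\nu t=0$, and hence $\mathrm{Tr}_{L/F(\alpha)}(\theta)=a\nu\alpha$. So the first goal is to recover $\alpha$ from $\theta$ in a Galois-equivariant fashion. Here the abelian hypothesis enters crucially: since $G$ is abelian, $\tau$ commutes with the partial trace $\mathrm{Tr}_{L/F(\alpha)}=\sum_{\sigma\in H}\sigma$, so applying this trace to the relation $\tau(\theta)=\theta$ yields $a\nu\,\tau(\alpha)=a\nu\alpha$. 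As $a\neq 0$ and $\nu\neq 0$ in the characteristic-zero field $F$, this forces $\tau(\alpha)=\alpha$, i.e. $\tau\in H$.

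Once $\tau\in H$ it fixes $\alpha$ and all of $F(\alpha)$, so the relation $\tau(\theta)=\theta$ collapses to $b\,\tau(\gamma)=b\gamma$, and since $b\neq 0$ we get $\tau(\gamma)=\gamma$. Writing $\gamma=\nu\beta-t$ with $\tau(t)=t$, this gives $\nu\tau(\beta)=\nu\beta$, whence $\tau(\beta)=\beta$. Therefore $\tau$ fixes both $\alpha$ and $\beta$; as $L=F(\alpha,\beta)$ it fixes all of $L$, so $\tau=1$. This shows the stabilizer of $\theta$ in $G$ is trivial, and consequently $F(\theta)=L$.

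The one genuinely load-bearing point, and the step I would handle most carefully, is the commutation of $\tau$ with $\mathrm{Tr}_{L/F(\alpha)}$: this is exactly where abelianness of $\mathrm{Gal}(L/F)$ is used, since in the non-abelian setting $\tau$ need not normalize $H$ and the extraction of $\alpha$ from $\theta$ would fail. Everything else is routine bookkeeping, relying only on $a,b$ and $\nu$ being nonzero in characteristic zero.
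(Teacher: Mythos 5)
Your proof is correct, and it rests on exactly the same key identity as the paper's proof --- namely $\mathrm{Tr}_{L/F(\alpha)}(\theta)=a\nu\alpha$, obtained because the two terms involving $\mathrm{Tr}_{L/F(\alpha)}(\beta)$ cancel --- but you run the argument on the group side where the paper runs it on the field side. The paper sets $\varepsilon=a\alpha+b(\nu\beta-\mathrm{Tr}_{L/F(\alpha)}(\beta))$ and observes that, $L/F$ being abelian, the subfield $F(\varepsilon)$ is itself Galois over $F$, so every conjugate $\varepsilon^\sigma$ lies in $F(\varepsilon)$; hence $\mathrm{Tr}_{L/F(\alpha)}(\varepsilon)=a\nu\alpha$ lies in $F(\varepsilon)$, giving $\alpha\in F(\varepsilon)$, after which $\beta$ is recovered by subtracting $a\alpha$ and adding back $b\,\mathrm{Tr}_{L/F(\alpha)}(\beta)$. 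You instead fix $\tau\in\mathrm{Gal}(L/F)$ stabilizing $\theta$ and pull $\tau$ through the partial trace $\sum_{\sigma\in H}\sigma$ using commutativity, concluding $\tau(\alpha)=\alpha$, then $\tau(\beta)=\beta$, so the stabilizer is trivial. These are dual formulations of the same mechanism: your commutation step $\tau\circ\mathrm{Tr}_{L/F(\alpha)}=\mathrm{Tr}_{L/F(\alpha)}\circ\tau$ really only needs that conjugation by $\tau$ permutes $H=\mathrm{Gal}(L/F(\alpha))$, i.e.\ that $H$ is normal in $G$, which is precisely the fact behind the paper's claim that $F(\varepsilon)/F$ is Galois; so both arguments would go through under the slightly weaker hypothesis that every subgroup of $\mathrm{Gal}(L/F)$ is normal, and neither buys extra generality over the other. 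The difference is organizational: the paper's version exhibits the field containments explicitly and is marginally shorter, while yours isolates cleanly the single step where the abelian (or normality) hypothesis is genuinely load-bearing, which makes the failure mode in the general non-normal setting easier to see.
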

\begin{proof}
Let $\varepsilon=a\alpha+b(\nu\beta-\mathrm{Tr}_{L/F(\alpha)}(\beta))\in L$.
Then we have
\begin{equation}\label{trace}
\mathrm{Tr}_{L/F(\alpha)}(\varepsilon)=a\alpha\mathrm{Tr}_{L/F(\alpha)}(1)+b\nu\mathrm{Tr}_{L/F(\alpha)}(\beta)-b\mathrm{Tr}_{L/F(\alpha)}(\beta)\mathrm{Tr}_{L/F(\alpha)}(1)=a\alpha\nu.
\end{equation}
Since $L$ is an abelian extension of $F$, so is $F(\varepsilon)$ by Galois theory.
Thus $\varepsilon^\sigma\in F(\varepsilon)$ for any $\sigma\in\mathrm{Gal(L/F)}$, and hence $\mathrm{Tr}_{L/F(\alpha)}(\varepsilon)\in F(\varepsilon)$.
This impies that $\alpha\in F(\varepsilon)$ by (\ref{trace}).
Therefore, we achieve
\begin{eqnarray*}
F(\varepsilon)=F(\alpha, \varepsilon)=F(\alpha, \varepsilon-a\alpha+b\mathrm{Tr}_{L/F(\alpha)}(\beta))
=F(\alpha,b\nu\beta)=L.
\end{eqnarray*}
\end{proof}

\begin{theorem}\label{generator}
Let $\ell$ and $p$ be odd primes, $n=({\ell-1})/{2}$ and $\mu\in\mathbb{Z}_{>0}$. 
And, let $\mathbf{r}$, $\mathbf{s}$, $z_\ell$ and $A_\ell(\mathbf{r},\mathbf{s})$ be as above. 
Assume that $z_\ell$ is neither a zero nor a pole of $~\displaystyle\prod_{i=1}^{n+1} \Phi_{[\mathbf{r},\mathbf{s};\mu,i]}(z)_p$. 
If $p\nmid \ell h_\ell^+ n\cdot \det\big(A_\ell(\mathbf{r},\mathbf{s})\big)$, then we have
\begin{equation}\label{odd}
K_{2\mu-1-\alpha}=k_{2\mu-1-\alpha}\left(\sum_{i=1}^{n+1}\Phi_{[\mathbf{r},\mathbf{s};\mu,i]}(z_\ell)_p^{p^{\alpha}}\right)
\end{equation}
for $\alpha=0,1,\ldots, \mu-1$.
Moreover, if $\dim_{\mathbb{Z}/p\mathbb{Z}}(H_1/S_2)=n-1$ then we have
\begin{equation}\label{ray class}
k_{2\mu-\alpha}=k_{\mu}\left(\sum_{i=1}^{n+1}\Phi_{[\mathbf{r},\mathbf{s};\mu,i]}(z_\ell)_p^{p^{\alpha}}\right)
\end{equation}
for $\alpha=0,1,\ldots, \mu-1$.
\end{theorem}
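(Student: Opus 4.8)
Throughout set $\eta_\alpha=\sum_{i=1}^{n+1}\Phi_{[\mathbf{r},\mathbf{s};\mu,i]}(z_\ell)_p^{p^\alpha}$. The plan is to prove both (\ref{odd}) and (\ref{ray class}) by the same mechanism: compute the character through which the relevant Galois group acts on each summand $\Phi_{[\mathbf{r},\mathbf{s};\mu,i]}(z_\ell)_p^{p^\alpha}$, and then deduce that the only automorphism fixing $\eta_\alpha$ is the identity. First I would fix $m=2\mu-1-\alpha$ and record, via Lemma \ref{phi-class}, that each summand lies in $K_m$ (the hypotheses on $z_\ell$ guarantee it is a nonzero finite value); hence $\eta_\alpha\in K_m$ and $k_m(\eta_\alpha)\subseteq K_m$, and (\ref{odd}) is equivalent to triviality of the stabilizer of $\eta_\alpha$ in $\mathrm{Gal}(K_m/k_m)$.

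For the character I would substitute $r=\tfrac{1}{p^\mu}r_i$, $s=\tfrac{1}{p^\mu}s_i$ into formula (\ref{formula}) and read off that the generator $\sigma_j=\big(\tfrac{K_m/k}{(\O_{m,j})}\big)$ acts on $\Phi_{[\mathbf{r},\mathbf{s};\mu,i]}(z_\ell)_p^{p^\alpha}$ by the scalar $\Z_p^{a_{ij}}$, with $A_\ell(\mathbf{r},\mathbf{s})=(a_{ij})$ the matrix of Lemma \ref{depend}; crucially this value is independent of $\mu$ and $\alpha$. So the summands are simultaneous eigenvectors whose eigencharacters are the rows of $A_\ell(\mathbf{r},\mathbf{s})$. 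Since $p\nmid\det A_\ell(\mathbf{r},\mathbf{s})$ those rows are independent over $\mathbb{Z}/p\mathbb{Z}$, which both forces $\mathrm{Gal}(K_m/k_m)\cong(\mathbb{Z}/p\mathbb{Z})^{n+1}$ with $\sigma_1,\dots,\sigma_{n+1}$ a basis (any relation $\prod_j\sigma_j^{d_j}=1$ yields $A_\ell(\mathbf{r},\mathbf{s})\,d\equiv 0$, hence $d\equiv 0$) and makes the eigencharacters pairwise distinct. As $\Z_p\in k_m$, eigenvectors for distinct characters are linearly independent over $k_m$; therefore $\eta_\alpha^\sigma=\eta_\alpha$ forces every eigenvalue to be $1$, i.e. $\sigma=1$, proving (\ref{odd}).

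For (\ref{ray class}) I would first note that the nonsingularity of $A_\ell(\mathbf{r},\mathbf{s})$ forces $\mathrm{rank}(M_\ell(p))=n+1$, hence $p\nmid\det(N_\ell)$ by Lemma \ref{independence}, so that all hypotheses of Corollary \ref{unit2} hold and $K_\nu=k_{\nu+1}$ for every $\nu$; in particular $K_m=k_{2\mu-\alpha}$, and it remains to descend the base from $k_m$ to $k_\mu$. Put $G_\mu=\mathrm{Gal}(k_{2\mu-\alpha}/k_\mu)$; the tower degrees give $|G_\mu|=p^{(n+1)(\mu-\alpha)}$ of exponent $p^{\mu-\alpha}$, and the level-$\mu$ symbols $g_{\mu,j}=\big(\tfrac{k_{2\mu-\alpha}/k}{(\O_{\mu,j})}\big)$ generate $G_\mu/pG_\mu\cong\mathrm{Gal}(k_{\mu+1}/k_\mu)$, hence generate $G_\mu$ by Nakayama; thus $G_\mu\cong(\mathbb{Z}/p^{\mu-\alpha}\mathbb{Z})^{n+1}$ with the $g_{\mu,j}$ a basis. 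Each $g_{\mu,j}$ shifts $(r,s)$ by a vector in $\mathbb{Z}^{2n}$, so by Lemma \ref{theta} it scales $\Phi_{[\mathbf{r},\mathbf{s};\mu,i]}(z_\ell)_p^{p^\alpha}$; the scalars lie in $k_\mu$, so the summands are eigenvectors for all of $G_\mu$, say $g_{\mu,j}$ acting by $\Z_{p^{\mu-\alpha}}^{c_{ij}}$. The key claim is $c_{ij}\equiv a_{ij}\pmod p$; granting it, $(c_{ij})$ is invertible over $\mathbb{Z}/p^{\mu-\alpha}\mathbb{Z}$ (detected mod $p$), the joint character map $G_\mu\to\langle\Z_{p^{\mu-\alpha}}\rangle^{n+1}$ is an isomorphism, and linear independence of the eigenvectors over $k_\mu$ (which contains $\Z_{p^{\mu-\alpha}}$) again forces the stabilizer of $\eta_\alpha$ to be trivial, yielding (\ref{ray class}).

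The main obstacle is the uniform eigenvalue claim $c_{ij}\equiv a_{ij}\pmod p$. One must re-run the computation behind formula (\ref{formula}) with $\O_{\mu,j}=1+2p^\mu\Z^j$ in place of $\O_{2\mu-1-\alpha,j}$ and verify that $N_{k/\mathbb{Q}}(\O_{\mu,j})=1-2p^\mu+O(p^{\mu+1})$ contributes $-{}^t r_i s_i$ to the exponent while the theta-translation of Lemma \ref{theta} contributes $\tfrac12({}^t r_i b_{1,j}-{}^t a_{1,j}s_i)$, their sum being exactly the integer $a_{ij}=-{}^t r_i s_i+\tfrac12({}^t r_i b_{1,j}-{}^t a_{1,j}s_i)$ of Lemma \ref{depend}, whereas every remaining contribution (the off-diagonal part of $\V^*(\O_{\mu,j})$ and the $O(p^{\mu+1})$ tail of the norm) carries an extra factor $p$ and so drops out modulo $p$. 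Establishing that this leading exponent is the same $a_{ij}$ at the base level as at the top level -- so that the single determinant $\det A_\ell(\mathbf{r},\mathbf{s})$ governs generation over both $k_m$ and $k_\mu$ -- together with the structural identification $G_\mu\cong(\mathbb{Z}/p^{\mu-\alpha}\mathbb{Z})^{n+1}$, is the technical heart; the remaining steps are formal.
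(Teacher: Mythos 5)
Your treatment of (\ref{odd}) is correct and takes a genuinely different route from the paper's. You exploit that the summands $\Phi_{[\mathbf{r},\mathbf{s};\mu,i]}(z_\ell)_p^{p^\alpha}$ are nonzero simultaneous eigenvectors for $\mathrm{Gal}(K_{2\mu-1-\alpha}/k_{2\mu-1-\alpha})$, with eigencharacters given by the rows of $A_\ell(\mathbf{r},\mathbf{s})$ (your claim that the exponents do not depend on $\mu$ and $\alpha$ is exactly the paper's formula (\ref{formula2})); invertibility of $A_\ell(\mathbf{r},\mathbf{s})$ mod $p$ makes these characters jointly injective and pairwise distinct, and linear independence over $k_{2\mu-1-\alpha}\ni\Z_p$ of eigenvectors belonging to distinct characters kills the stabilizer of the sum. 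The paper argues differently: it first proves joint generation $K_{2\mu-1-\alpha}=k_{2\mu-1-\alpha}\big(\Phi_{[\mathbf{r},\mathbf{s};\mu,1]}(z_\ell)_p^{p^\alpha},\ldots,\Phi_{[\mathbf{r},\mathbf{s};\mu,n+1]}(z_\ell)_p^{p^\alpha}\big)$ via auxiliary elements $\gamma_i$ with diagonalized Galois action (again using $p\nmid\det A_\ell(\mathbf{r},\mathbf{s})$ and the bound (\ref{Galois})), and then passes from the tuple to the sum by induction on partial sums, applying Lemma \ref{primitive} together with the vanishing of $\mathrm{Tr}_{L_{m+1}/L_m}$. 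Your mechanism is shorter and yields $\mathrm{Gal}(K_{2\mu-1-\alpha}/k_{2\mu-1-\alpha})\cong(\mathbb{Z}/p\mathbb{Z})^{n+1}$ as a by-product; both are sound.

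For (\ref{ray class}) there is a genuine gap, located exactly where you placed your bets. Your reduction to Corollary \ref{unit2} is fine (rank $M_\ell(p)=n+1$ follows from part one and Theorem \ref{dimension}, then Lemma \ref{independence} gives $p\nmid\det N_\ell$, so $K_\nu=k_{\nu+1}$ for all $\nu$), and your key claim $c_{ij}\equiv a_{ij}\pmod p$ is correct for precisely the reason you sketch: with $\O_{\mu,j}$ the shift vector is $2\,{^t}h\big(\V^+(\Z^j)\big)$ applied to $(r_i,s_i)$, the same integral vector as at level one, while the norm tail and the $2p^{2\mu}$-term contribute only $p$-th powers of $p^{\mu-\alpha}$-th roots of unity. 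The problem is the Nakayama step: you assert $G_\mu/pG_\mu\cong\mathrm{Gal}(k_{\mu+1}/k_\mu)$ and deduce that the $n+1$ symbols $g_{\mu,j}$ generate $G_\mu$. But that isomorphism is \emph{equivalent} to $G_\mu$ being generated by $n+1$ elements, which is what Nakayama was supposed to deliver; the reasoning is circular. ``Tower degrees'' only give $|G_\mu|$, and a priori $G_\mu$ is merely a quotient of $S_\mu/S_{2\mu-\alpha}\cong\mathcal{O}_k/p^{\mu-\alpha}\mathcal{O}_k\cong(\mathbb{Z}/p^{\mu-\alpha}\mathbb{Z})^{2n}$, so it could need up to $2n$ generators (the exponent bound also requires this class-field-theoretic description, not degrees). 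The step is true but needs a real argument, e.g.: the images of $\varepsilon_1^{p^{\mu-1}},\ldots,\varepsilon_{n-1}^{p^{\mu-1}}$ (with $\varepsilon_i$ as in Corollary \ref{unit2}) in $\mathcal{O}_k/p^{\mu-\alpha}\mathcal{O}_k$ are independent mod $p$, hence span a direct summand $\cong(\mathbb{Z}/p^{\mu-\alpha}\mathbb{Z})^{n-1}$ by Smith normal form, so $G_\mu$ is a quotient of $(\mathbb{Z}/p^{\mu-\alpha}\mathbb{Z})^{n+1}$ of order $p^{(n+1)(\mu-\alpha)}$ and therefore equal to it. Better still, you can bypass the structure of $G_\mu$ entirely: every $\sigma\in G_\mu$ is an Artin symbol of some $(\O)$ with $\O\in S_\mu$ and hence acts on each summand by a $p^{\mu-\alpha}$-th root of unity lying in $k_\mu$; since $(c_{ij})$ is invertible mod $p^{\mu-\alpha}$, the image of the joint character map $G_\mu\rightarrow\langle\Z_{p^{\mu-\alpha}}\rangle^{n+1}$ already contains the subgroup generated by its columns, of order $p^{(n+1)(\mu-\alpha)}=|G_\mu|$, so the map is injective and your linear-independence argument finishes. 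Note finally that the paper sidesteps all of this by never analyzing $G_\mu$: it identifies $K_{2\mu-1-\alpha}=k_{2\mu-\alpha}$ by a degree count, replaces the base field $k_{2\mu-1-\alpha}$ by $k_\mu$ (legitimate by induction on $\alpha$, since $k_{2\mu-1-\alpha}=k_\mu\big(\sum_i\Phi_{[\mathbf{r},\mathbf{s};\mu,i]}(z_\ell)_p^{p^{\alpha+1}}\big)$), and reruns the trace induction with Lemma \ref{primitive}, using transitivity of traces to force $\mathrm{Tr}_{L'_{m+1}/L'_m}=0$.
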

\begin{proof}
Let $\mathbf{x}_{\mu,i}=r_i/p^\mu$ and $\mathbf{y}_{\mu,i}=s_i/p^\mu$ for each $\mu$ and $i$.
Then $\mathbf{x}_{\mu,i}=\mathbf{x}_{1,i}/p^{\mu-1}$ and $\mathbf{y}_{\mu,i}=\mathbf{y}_{1,i}/p^{\mu-1}$.
By Lemma \ref{phi-class}, $\Phi_{[\mathbf{r},\mathbf{s};\mu,i]}(z_\ell)_p^{p^{\alpha}}\in K_{2\mu-1-\alpha}$.
It then follows from the formula (\ref{formula}) that for $1\leq i,j\leq n+1$
\begin{equation*}
\big(\Phi_{[\mathbf{r},\mathbf{s};\mu,i]}(z_\ell)_p^{p^\alpha}\big)^{\big(\frac{K_{2\mu-1-\alpha}/k}{(\O_{2\mu-1-\alpha,j})}\big)}
=e(-p^{2\mu-1}\cdot{^t}\mathbf{x}_{\mu,i}\mathbf{y}_{\mu,i})e\Big(\frac{{^t}\mathbf{x}_{\mu,i} b_{2\mu-1,j}-{^t}a_{2\mu-1,j}\mathbf{y}_{\mu,i}}{2}\Big)\Phi_{[\mathbf{r},\mathbf{s};\mu,i]}(z_\ell)_p^{p^\alpha},
\end{equation*}
where $a_{2\mu-1,j},b_{2\mu-1,j}\in 2p^{\mu-1}\mathbb{Z}^n$ such that
\begin{eqnarray*}
\left[\begin{matrix}
a_{2\mu-1,j}\\
b_{2\mu-1,j}
\end{matrix}\right]
&=&2p^{2\mu-1}\cdot{^t}h\big(\V^+(\Z^j)\big)\left[\begin{matrix}\mathbf{x}_{\mu,i}\\ \mathbf{y}_{\mu,i}\end{matrix}\right]\\
&=&2p^{\mu}\cdot{^t}h\big(\V^+(\Z^j)\big)\left[\begin{matrix}\mathbf{x}_{1,i}\\ \mathbf{y}_{1,i}\end{matrix}\right]\\
&=&p^{\mu-1}\left[\begin{matrix}a_{1,j}\\b_{1,j}\end{matrix}\right].
\end{eqnarray*}
And, we ensure
\begin{equation}\label{formula2}
\begin{array}{lll}
\big(\Phi_{[\mathbf{r},\mathbf{s};\mu,i]}(z_\ell)_p^{p^\alpha}\big)^{\big(\frac{K_{2\mu-1-\alpha}/k}{(\O_{2\mu-1-\alpha,j})}\big)}
&=&\displaystyle e(-p\cdot{^t}\mathbf{x}_{1,i}\mathbf{y}_{1,i})e\Big(\frac{{^t}\mathbf{x}_{1,i} b_{1,j}-{^t}a_{1,j}\mathbf{y}_{1,i}}{2}\Big)\Phi_{[\mathbf{r},\mathbf{s};\mu,i]}(z_\ell)_p^{p^\alpha}\vspace{0.1cm}\\
&=&\displaystyle\Z_p^{c_{ij}}\Phi_{[\mathbf{r},\mathbf{s};\mu,i]}(z_\ell)_p^{p^\alpha},
\end{array}
\end{equation}
where $c_{ij}$ is the $(i,j)^\mathrm{th}$ entry of $A_\ell(\mathbf{r},\mathbf{s})$.
Here, we observe that for $1\leq i \leq n+1$ there exists $\gamma_i\in k_{2\mu-1-\alpha}\big(\Phi_{[\mathbf{r},\mathbf{s};\mu,1]}(z_\ell)_p^{p^\alpha}, \Phi_{[\mathbf{r},\mathbf{s};\mu,2]}(z_\ell)_p^{p^\alpha}, \ldots, \Phi_{[\mathbf{r},\mathbf{s};\mu,n+1]}(z_\ell)_p^{p^\alpha}\big)$ with
\begin{equation*}
\gamma_i^{\big(\frac{K_{2\mu-1-\alpha}/k}{(\O_{2\mu-1-\alpha,j})}\big)}=
\left\{ \begin{array}{ll}
\Z_p\gamma_i & \textrm{if~ $j = i$}\\
\gamma_i & \textrm{if $j\neq i$}\\
\end{array} \right. 
\end{equation*}
because $p\nmid \det(A_\ell(\mathbf{r},\mathbf{s}))$.
Since $\big|\mathrm{Gal}(K_{2\mu-1-\alpha}/k_{2\mu-1-\alpha})\big|\leq p^{n+1}$ by (\ref{Galois}), 
we deduce 
\begin{equation}\label{generators}
\begin{array}{lll}
K_{2\mu-1-\alpha}&=&k_{2\mu-1-\alpha}\big(\gamma_1,\gamma_2,\ldots,\gamma_{n+1}\big)\\
&=&k_{2\mu-1-\alpha}\left(\Phi_{[\mathbf{r},\mathbf{s};\mu,1]}(z_\ell)_p^{p^{\alpha}}, \Phi_{[\mathbf{r},\mathbf{s};\mu,2]}(z_\ell)_p^{p^{\alpha}}, \ldots, \Phi_{[\mathbf{r},\mathbf{s};\mu,n+1]}(z_\ell)_p^{p^{\alpha}}\right).
\end{array}
\end{equation}
Now, let $L_i=k_{2\mu-1-\alpha}\left(\Phi_{[\mathbf{r},\mathbf{s};\mu,1]}(z_\ell)_p^{p^{\alpha}}, \Phi_{[\mathbf{r},\mathbf{s};\mu,2]}(z_\ell)_p^{p^{\alpha}}, \ldots, \Phi_{[\mathbf{r},\mathbf{s};\mu,i]}(z_\ell)_p^{p^{\alpha}}\right)$ for each $1\leq i\leq n+1$.
Suppose that $L_m=k_{2\mu-1-\alpha}\left(\sum_{i=1}^{m}\Phi_{[\mathbf{r},\mathbf{s};\mu,i]}(z_\ell)_p^{p^{\alpha}}\right)$ for some integer $1\leq m\leq n$.
Note that $[L_{m+1}:L_m]=p$ and
\begin{equation*}
\mathrm{Tr}_{L_{m+1}/L_m}(\Phi_{[\mathbf{r},\mathbf{s};\mu,m+1]}(z_\ell)_p^{p^{\alpha}})=\sum_{j=0}^{p-1}\Z_p^j \Phi_{[\mathbf{r},\mathbf{s};\mu,m+1]}(z_\ell)_p^{p^{\alpha}}=0,
\end{equation*}
due to the fact $\sum_{j=0}^{p-1}\Z_p^j=0$.
Hence if we take $a=1$ and $b={1}/{p}$, we achieve by Lemma \ref{primitive}
\begin{equation*}
L_{m+1}=k_{2\mu-1-\alpha}\left(\sum_{i=1}^{m}\Phi_{[\mathbf{r},\mathbf{s};\mu,i]}(z_\ell)_p^{p^{\alpha}}, \Phi_{[\mathbf{r},\mathbf{s};\mu,m+1]}(z_\ell)_p^{p^{\alpha}}\right)=k_{2\mu-1-\alpha}\left(\sum_{i=1}^{m+1}\Phi_{[\mathbf{r},\mathbf{s};\mu,i]}(z_\ell)_p^{p^{\alpha}}\right).
\end{equation*}
Therefore (\ref{odd}) is proved by induction and (\ref{generators}).
\par
If $\dim_{\mathbb{Z}/p\mathbb{Z}}(H_1/S_2)=n-1$, then by the proof of Corollary \ref{unit2} we get 
\begin{equation*}
|\mathrm{Gal}(k_{2\mu-\alpha}/k_{2\mu-1-\alpha})|=p^{n+1}.
\end{equation*}
Since $\big|\mathrm{Gal}(K_{2\mu-1-\alpha}/k_{2\mu-1-\alpha})\big|= p^{n+1}$ and $K_{2\mu-1-\alpha}\subset k_{2\mu-\alpha}$, we conclude $K_{2\mu-1-\alpha}=k_{2\mu-\alpha}$.
Hence, by (\ref{generators})
\begin{eqnarray*}
k_{2\mu-\alpha}&=&k_{2\mu-1-\alpha}\big(\Phi_{[\mathbf{r},\mathbf{s};\mu,1]}(z_\ell)_p^{p^{\alpha}}, \Phi_{[\mathbf{r},\mathbf{s};\mu,2]}(z_\ell)_p^{p^{\alpha}}, \ldots, \Phi_{[\mathbf{r},\mathbf{s};\mu,n+1]}(z_\ell)_p^{p^{\alpha}}\big)\\
&=&k_{\mu}\big(\Phi_{[\mathbf{r},\mathbf{s};\mu,1]}(z_\ell)_p^{p^{\alpha}}, \Phi_{[\mathbf{r},\mathbf{s};\mu,2]}(z_\ell)_p^{p^{\alpha}}, \ldots, \Phi_{[\mathbf{r},\mathbf{s};\mu,n+1]}(z_\ell)_p^{p^{\alpha}}\big)
\end{eqnarray*}
for  $\alpha=0, 1, \ldots, \mu-1$.
Let $L_{i}'=k_{\mu}\left(\Phi_{[\mathbf{r},\mathbf{s};\mu,1]}(z_\ell)_p^{p^{\alpha}}, \Phi_{[\mathbf{r},\mathbf{s};\mu,2]}(z_\ell)_p^{p^{\alpha}}, \ldots, \Phi_{[\mathbf{r},\mathbf{s};\mu,i]}(z_\ell)_p^{p^{\alpha}}\right)$ for each $1\leq i\leq n+1$.
Suppose that $L_{m}'=k_{\mu}\left(\sum_{i=1}^{m}\Phi_{[\mathbf{r},\mathbf{s};\mu,i]}(z_\ell)_p^{p^{\alpha}}\right)$ for some integer $1\leq m\leq n$.
Then we have
\begin{eqnarray*}
\mathrm{Tr}_{L_{m+1}/L_{m}'}(\Phi_{[\mathbf{r},\mathbf{s};\mu,m+1]}(z_\ell)_p^{p^{\alpha}})&=&
\mathrm{Tr}_{L_{m}/L_{m}'}\big(\mathrm{Tr}_{L_{m+1}/L_{m}}(\Phi_{[\mathbf{r},\mathbf{s};\mu,m+1]}(z_\ell)_p^{p^{\alpha}})\big)~(=0)\\
&=&\mathrm{Tr}_{L_{m+1}'/L_{m}'}\big(\mathrm{Tr}_{L_{m+1}/L_{m+1}'}(\Phi_{[\mathbf{r},\mathbf{s};\mu,m+1]}(z_\ell)_p^{p^{\alpha}})\big)\\
&=&[L_{m+1} : L_{m+1}']\cdot \mathrm{Tr}_{L_{m+1}'/L_{m}'}(\Phi_{[\mathbf{r},\mathbf{s};\mu,m+1]}(z_\ell)_p^{p^{\alpha}}),
\end{eqnarray*}
and so $\mathrm{Tr}_{L_{m+1}'/L_{m}'}(\Phi_{[\mathbf{r},\mathbf{s};\mu,m+1]}(z_\ell)_p^{p^{\alpha}})=0$.
Therefore, by Lemma \ref{primitive}
\begin{equation*}
L_{m+1}'=k_{\mu}\left(\sum_{i=1}^{m}\Phi_{[\mathbf{r},\mathbf{s};\mu,i]}(z_\ell)_p^{p^{\alpha}}, \Phi_{[\mathbf{r},\mathbf{s};\mu,m+1]}(z_\ell)_p^{p^{\alpha}}\right)=k_{\mu}\left(\sum_{i=1}^{m+1}\Phi_{[\mathbf{r},\mathbf{s};\mu,i]}(z_\ell)_p^{p^{\alpha}}\right)
\end{equation*}
and (\ref{ray class}) is proved again by induction.
\end{proof}

Although we omit in the above theorem the case where $p$ divides $\det(A_\ell(\mathbf{r},\mathbf{s}))$, by utilizing Theorem \ref{dimension} we might find suitable generators of $K_\mu$ over $k_\mu$ for each $\mu\in\mathbb{Z}_{>0}$.
\par
Let $\mathbf{r_0}=[r_i]_{1\leq i \leq n+1}$ and $\mathbf{s_0}=[s_i]_{1\leq i \leq n+1}$ where 
$r_i={^t}[1~~0~~\cdots~~0]\in\mathbb{Z}^n$ and $s_i={^t}\big[(s_i)_j\big]_{1\leq j\leq n}\in\mathbb{Z}^n$ for $1\leq i\leq n+1$ with
\begin{displaymath}
(s_i)_j = \left\{ \begin{array}{ll}
1~ & \textrm{if~ $j < i$}\\
0~ & \textrm{otherwise}.\\
\end{array} \right.
\end{displaymath}
Here we observe that $\Phi_{[\mathbf{r_0},\mathbf{s_0};\mu,i]}(z)_p$ is not identically zero for all $\mu$ and $i$ by Proposition \ref{theta-constant}.

\begin{corollary}\label{generator2}
Let $\ell$ and $p$ be odd primes and $z_\ell$ be as above.
Put $n=({\ell-1})/{2}$ and $\mu\in\mathbb{Z}_{>0}$.
Further, we assume that $z_\ell$ is not a zero of $~\displaystyle\prod_{i=1}^{n+1} \Phi_{[\mathbf{r_0},\mathbf{s_0};\mu,i]}(z)_p$.
\begin{itemize}
\item[\textup{(i)}] Let $\ell=7$. 
If $p\neq 3,7$, then we have
\begin{equation}\label{case1}
K_{2\mu-1-\alpha}=k_{2\mu-1-\alpha}\left(\sum_{i=1}^4\Phi_{[\mathbf{r_0},\mathbf{s_0};\mu,i]}(z_7)_p^{p^\alpha}\right)
\end{equation}
for $0\leq\alpha\leq \mu-1$.
\item[\textup{(ii)}] Let $\ell=11$.
If $p\neq 3,5,11$, then we have
\begin{equation}\label{case2-1}
K_{2\mu-1-\alpha}=k_{2\mu-1-\alpha}\left(\sum_{i=1}^6\Phi_{[\mathbf{r_0},\mathbf{s_0};\mu,i]}(z_{11})_p^{p^\alpha}\right)
\end{equation}
for $0\leq\alpha\leq \mu-1$.
And, if $p=3$ then we get
\begin{equation}\label{case2-2}
K_{2\mu-1-\alpha}=k_{2\mu-1-\alpha}\left(\sum_{i=1}^5\Phi_{[\mathbf{r_0},\mathbf{s_0};\mu,i]}(z_{11})_3^{3^\alpha}\right).
\end{equation}
\item[\textup{(iii)}] Let $\ell=13$.
If $p\neq 3,5,13$, then we have
\begin{equation}\label{case3-1}
K_{2\mu-1-\alpha}=k_{2\mu-1-\alpha}\left(\sum_{i=1}^7\Phi_{[\mathbf{r_0},\mathbf{s_0};\mu,i]}(z_{13})_p^{p^\alpha}\right)
\end{equation}
for $0\leq\alpha\leq \mu-1$.
And, if $p=5$ then we obtain
\begin{equation}\label{case3-2}
K_{2\mu-1-\alpha}=k_{2\mu-1-\alpha}\left(\sum_{i=1}^6\Phi_{[\mathbf{r_0},\mathbf{s_0};\mu,i]}(z_{13})_5^{5^\alpha}\right).
\end{equation}
\item[\textup{(iv)}] Let $\ell=5$. 
Then we have
\begin{equation}\label{case4}
K_{2\mu-1-\alpha}=k_{2\mu-1-\alpha}\left(\Z_{p^{2\mu-\alpha}}+ \Phi_{[\mathbf{r_0},\mathbf{s_0};\mu,1]}(z_{5})_p^{p^\alpha}+\Phi_{[\mathbf{r_0},\mathbf{s_0};\mu,3]}(z_{5})_p^{p^\alpha}\right)
\end{equation}
for $0\leq\alpha\leq \mu-1$.
\end{itemize}
\end{corollary}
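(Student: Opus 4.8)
The plan is to obtain each part as a specialization of Theorem~\ref{generator}, supplemented---when its hypothesis $p\nmid\det\big(A_\ell(\mathbf{r_0},\mathbf{s_0})\big)$ fails---by the generation technique inside its proof together with the rank count of Theorem~\ref{dimension}. The first task is purely computational: using the explicit matrix $h(\Z)$ recorded above and the identity $\V^+(\Z^j)=\sum_{i=1}^n\Z^{ji^{-1}\bmod\ell}$, I would evaluate the entries $a_{ij}=-{}^tr_is_i+\tfrac12\big({}^tr_ib_{1,j}-{}^ta_{1,j}s_i\big)$ of Lemma~\ref{depend}, where $\binom{a_{1,j}}{b_{1,j}}=2\,{}^th\big(\V^+(\Z^j)\big)\binom{r_i}{s_i}$, for each $\ell\in\{5,7,11,13\}$. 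I would also record the inputs $h_5^+=h_7^+=h_{11}^+=h_{13}^+=1$, so that $\ell h_\ell^+n$ equals $10,21,55,78$ respectively, and note that the standing hypothesis that $z_\ell$ is not a zero of $\prod_i\Phi_{[\mathbf{r_0},\mathbf{s_0};\mu,i]}(z)_p$ is exactly what licenses Lemma~\ref{phi-class} and Theorem~\ref{generator}, the thetas being not identically zero by Proposition~\ref{theta-constant}.

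For the generic primes the identities (\ref{case1}), (\ref{case2-1}), (\ref{case3-1}) are then immediate. Here I would display the factorization of $\det\big(A_\ell(\mathbf{r_0},\mathbf{s_0})\big)$ and check that its radical contributes no prime beyond the excluded set: for $\ell=7$ the radical divides $21$, for $\ell=11$ it adds only $3$ to $55$, and for $\ell=13$ only $5$ to $78$. Thus for $p$ outside the listed exceptional set one has $p\nmid \ell h_\ell^+n\cdot\det\big(A_\ell(\mathbf{r_0},\mathbf{s_0})\big)$, and Theorem~\ref{generator} with $(\mathbf r,\mathbf s)=(\mathbf{r_0},\mathbf{s_0})$ applies verbatim, producing the full $(n+1)$-term sums.

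For the rank-drop primes $p=3$ (when $\ell=11$) and $p=5$ (when $\ell=13$) I would invoke Theorem~\ref{dimension}: since $3\nmid55$ and $5\nmid78$ we still have $\mathrm{Gal}(K_\mu/k_\mu)\cong(\mathbb{Z}/p\mathbb{Z})^{\mathrm{rank}(M_\ell(p))}$, and the minor computation of Lemma~\ref{independence} shows $p\mid\det(N_\ell)$ while the rank is exactly $n$. Hence only $n$ of the Artin generators are independent; I would then verify that the $n\times n$ submatrix of $A_\ell(\mathbf{r_0},\mathbf{s_0})$ indexed by $i=1,\dots,n$ and a matching independent set of generators is invertible modulo $p$, and rerun the telescoping of Theorem~\ref{generator}. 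As $\mathrm{Tr}_{L_{m+1}/L_m}\big(\Phi_{[\mathbf{r_0},\mathbf{s_0};\mu,m+1]}(z_\ell)_p^{p^\alpha}\big)=0$ because $\sum_{j=0}^{p-1}\Z_p^j=0$, Lemma~\ref{primitive} again collapses the $n$ generators into their sum, yielding the shortened (\ref{case2-2}) and (\ref{case3-2}).

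The genuinely different case is $\ell=5$, where the computation gives $A_5(\mathbf{r_0},\mathbf{s_0})=\left(\begin{smallmatrix}-1&-1&1\\-2&-4&2\\-4&-6&4\end{smallmatrix}\right)$, whose third row is twice the first plus the second, so $\det\big(A_5(\mathbf{r_0},\mathbf{s_0})\big)=0$ and no three theta values can generate; yet $\det(N_5)=\pm1$ forces $\mathrm{Gal}(K_\mu/k_\mu)\cong(\mathbb{Z}/p\mathbb{Z})^3$ whenever $p\nmid\ell h_\ell^+n$. To recover the missing coordinate I would bring in $\Z_{p^{2\mu-\alpha}}$, which lies in $K_{2\mu-1-\alpha}$: for $\O$ with $\V^*(\O)\in H_{2\mu-1-\alpha}$ the argument of Lemma~\ref{phi-class} forces $\V^*(\O)\in S_{2\mu-\alpha}$, whence $N_{k/\mathbb{Q}}(\O)=\V^*(\O)\overline{\V^*(\O)}\equiv1\pmod{p^{2\mu-\alpha}}$ and the cyclotomic character is trivial on $\ker(\widetilde{\V^*_{2\mu-1-\alpha}})$. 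Each Artin generator then acts on $\Z_{p^{2\mu-\alpha}}$ by a common factor $\Z_p^{c}$ with $c\equiv\mp2\pmod p$, read off from $v_{2\mu-1-\alpha,j}\equiv1-2p^{2\mu-1-\alpha}$; replacing the dependent row of $A_5(\mathbf{r_0},\mathbf{s_0})$ by the constant row $(c,c,c)$ gives determinant $\pm8$, invertible for all odd $p$. Thus $\Phi_{[\mathbf{r_0},\mathbf{s_0};\mu,1]}$, $\Phi_{[\mathbf{r_0},\mathbf{s_0};\mu,3]}$ and $\Z_{p^{2\mu-\alpha}}$ generate $K_{2\mu-1-\alpha}/k_{2\mu-1-\alpha}$, and since the root of unity also has vanishing trace over each intermediate $\mathbb{Z}/p\mathbb{Z}$-layer, the same telescoping delivers (\ref{case4}). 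The only step demanding real care is the bookkeeping: computing $A_\ell(\mathbf{r_0},\mathbf{s_0})$ correctly for $\ell=13$ and extracting the exact radical of its determinant, and---in the degenerate cases---pinning down which theta indices, matched against an independent set of generators, give an invertible minor; everything else is a direct transcription of Theorem~\ref{generator} and its proof.
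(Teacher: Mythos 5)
Your strategy for parts (i)--(iii) coincides with the paper's own proof: compute $A_\ell(\mathbf{r_0},\mathbf{s_0})$ explicitly, invoke Theorem \ref{generator} whenever $p\nmid \ell h_\ell^+ n\cdot\det\big(A_\ell(\mathbf{r_0},\mathbf{s_0})\big)$ (with $\det=2^6$, $2^7\cdot3\cdot5^2$, $-2^{12}\cdot5^2$ for $\ell=7,11,13$), and for the rank-drop primes ($p=3$ with $\ell=11$, $p=5$ with $\ell=13$) combine Theorem \ref{dimension} with an invertible $n\times n$ minor of $A_\ell(\mathbf{r_0},\mathbf{s_0})$ --- the paper takes rows $i=1,\dots,n$ against the Artin symbols $\big(\tfrac{K_1/k}{(\O_{1,j})}\big)$, $j=2,\dots,n+1$, getting determinants $2^5\cdot5\cdot11$ and $-2^7\cdot31$ --- followed by the telescoping of Lemma \ref{primitive}. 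That is exactly the paper's route, modulo actually carrying out the computations you defer.

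The genuine gap is in part (iv). The corollary asserts (\ref{case4}) for \emph{every} odd prime $p$, including $p=5$, but your argument is explicitly conditioned on $p\nmid\ell h_\ell^+ n=10$, so it never treats $p=5$; and the restriction is not cosmetic. For $p=\ell=5$ both Lemma \ref{phi-class} (which you need in order to place $\Phi_{[\mathbf{r_0},\mathbf{s_0};\mu,i]}(z_5)_p^{p^\alpha}$ inside $K_{2\mu-1-\alpha}$) and Theorem \ref{dimension} are unavailable, because their proofs rest on Lemma \ref{unit}, whose hypothesis $p\nmid\ell h_\ell^+$ fails when $p=\ell$. The paper closes this hole by importing Komatsu's result (\cite[p.316]{Komatsu}) that for $\ell=5$ the group $H_{2\mu-1-\alpha}/S_{2\mu-\alpha}$ is generated by real units of $k$ for every odd prime $p$; this substitutes for Lemma \ref{unit}, after which the idea of the proof of Theorem \ref{dimension} yields $(\V^*(S_{2\mu-1-\alpha})\cap H_{2\mu-1-\alpha})/S_{2\mu-\alpha}=\{0\}$, hence the needed membership of the theta values in $K_{2\mu-1-\alpha}$, and the $3\times3$ determinant $-2^3$ finishes the argument as you describe. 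Without this (or an equivalent substitute) your proof establishes (iv) only for $p\neq5$. The remainder of your $\ell=5$ discussion --- the vanishing of $\det\big(A_5(\mathbf{r_0},\mathbf{s_0})\big)$, adjoining $\Z_{p^{2\mu-\alpha}}$ with each Artin generator acting by $\Z_p^{-2}$ as read off from $N_{k/\mathbb{Q}}(\O_{2\mu-1-\alpha,j})\equiv 1-2p^{2\mu-1-\alpha}\pmod{2p^{2\mu-\alpha}}$, and the invertibility of the resulting $3\times3$ matrix --- agrees with the paper.
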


\begin{proof}
Let the matrix $M_\ell(p)$ be as in Lemma \ref{independence}.
Note that $h_\ell^+=1$ for $\ell\leq 67$ (\cite[p.352]{Washington}).
We can show that $z_\ell$ is not a pole of $~\displaystyle\prod_{i=1}^{n+1} \Phi_{[\mathbf{r_0},\mathbf{s_0};\mu,i]}(z)_p$ for $\ell\leq 17$ by utilizing the \texttt{RiemannTheta} command in Maple.
\begin{itemize}
\item[(i)] Using the formula (\ref{formula}) we can find $\Phi_{[\mathbf{r_0},\mathbf{s_0};1,i]}(z_7)_p^{\big(\frac{K_1/k}{(\O_{1,j})}\big)}$ for each $1\leq i,j\leq 4$ as follows:
\begin{equation*}
\bbordermatrix{%
 & \big(\frac{K_1/k}{(\O_{1,1})}\big) & \big(\frac{K_1/k}{(\O_{1,2})}\big)&\big(\frac{K_1/k}{(\O_{1,3})}\big)&\big(\frac{K_1/k}{(\O_{1,4})}\big) \cr
\Phi_{[\mathbf{r_0},\mathbf{s_0};1,1]}(z_7)_p & -1 & -1 & -1 & 1\cr
\Phi_{[\mathbf{r_0},\mathbf{s_0};1,2]}(z_7)_p & -2 & -4 & -2 & 0\cr
\Phi_{[\mathbf{r_0},\mathbf{s_0};1,3]}(z_7)_p & 0  & -10& -4 & 2 \cr
\Phi_{[\mathbf{r_0},\mathbf{s_0};1,4]}(z_7)_p & -3 & -13&-11 & 9 }
=A_7(\mathbf{r_0},\mathbf{s_0}).
\end{equation*}
Since $\det(A_7(\mathbf{r_0},\mathbf{s_0}))=2^6$ is prime to $p$, (\ref{case1}) is true by Theorem \ref{generator}.
\item[(ii)] First, suppose that $p\neq 3,5,11$. 
In a similar way as in (i) we obtain
\begin{equation*}
A_{11}(\mathbf{r_0},\mathbf{s_0})=
\left[\begin{matrix}
 -1 & -1 & -1 & -1 & -1 & 1\cr
 -2 & -4 & -2 & -4 & -2 & 0\cr
 -4 & -6 & -4 & -10 & 0 & -2 \cr
 -7 & -3 &-11 & -21 & -3& 1 \cr
 -7 & -5 &-25 & -29 & -1& -1 \cr
 -10& -2 &-48 & -34 & -6& 4 \cr
\end{matrix}\right].
\end{equation*}
Since $\det(A_{11}(\mathbf{r_0},\mathbf{s_0}))=2^7\cdot 3\cdot 5^2$ is prime to $p$, we get (\ref{case2-1}) by Theorem \ref{generator}.
If $p=3$, then the rank of $M_{11}(3)$ is equal to 5. 
Since $p\nmid 11\cdot 5$, by Theorem \ref{dimension} we deduce Gal$(K_\mu/k_\mu)\cong (\mathbb{Z}/3\mathbb{Z})^5$ for all $\mu\in\mathbb{Z}_{>0}$. 
And we observe that the determinant of the matrix
\begin{equation*}
\bbordermatrix{%
 & \big(\frac{K_1/k}{(\O_{1,2})}\big)&\big(\frac{K_1/k}{(\O_{1,3})}\big)&\big(\frac{K_1/k}{(\O_{1,4})}\big)&\big(\frac{K_1/k}{(\O_{1,5})}\big)&\big(\frac{K_1/k}
{(\O_{1,6})}\big) \cr
\Phi_{[\mathbf{r_0},\mathbf{s_0};1,1]}(z_{11})_3 & -1 & -1 & -1 & -1 & 1\cr
\Phi_{[\mathbf{r_0},\mathbf{s_0};1,2]}(z_{11})_3 & -4 & -2 & -4 & -2 & 0\cr
\Phi_{[\mathbf{r_0},\mathbf{s_0};1,3]}(z_{11})_3 & -6 & -4 & -10 & 0 & -2 \cr
\Phi_{[\mathbf{r_0},\mathbf{s_0};1,4]}(z_{11})_3 & -3 &-11 & -21 & -3& 1 \cr
\Phi_{[\mathbf{r_0},\mathbf{s_0};1,5]}(z_{11})_3 & -5 &-25 & -29 & -1& -1 \cr
}
\end{equation*}
is equal to $2^5\cdot5\cdot11$ which is prime to 3. 
Using Lemma \ref{primitive} and (\ref{formula2}) we can conclude (\ref{case2-2}).
\item[(iii)] First, suppose that $p\neq 3,5,13$. 
Then we derive
\begin{equation*}
A_{13}(\mathbf{r_0},\mathbf{s_0})=
\left[\begin{matrix}
 -1 & -1 & -1 & -1 & -1 & -1 & 1\cr
 -2 & -4 & -2 & -4 & -2 & -4 & 2\cr
  0 & -10& -4 & -6 & -4 & -10& 8 \cr
  5 & -19& -7 & -11& -11& -13& 11 \cr
  7 & -35&-13 & -13& -19& -15& 13 \cr
  2 & -60& -18& -8 & -32& -22& 20 \cr
 -10& -84&-28 &  2 & -54& -22& 20 \cr
\end{matrix}\right].
\end{equation*}
Since $\det(A_{13}(\mathbf{r_0},\mathbf{s_0}))=-2^{12}\cdot 5^2$ is prime to $p$, we have (\ref{case3-1}) again by Theorem \ref{generator}.
If $p=5$, then the rank of $M_{13}(5)$ is equal to 6. 
Since $p\nmid 13\cdot 6$, it follow from Theorem \ref{dimension} that
Gal$(K_\mu/k_\mu)\cong (\mathbb{Z}/5\mathbb{Z})^6$ for all $\mu\in\mathbb{Z}_{>0}$.
Observe that the determinant of the matrix
\begin{equation*}
\bbordermatrix{%
 & \big(\frac{K_1/k}{(\O_{1,2})}\big)&\big(\frac{K_1/k}{(\O_{1,3})}\big)&\big(\frac{K_1/k}{(\O_{1,4})}\big)&\big(\frac{K_1/k}{(\O_{1,5})}\big)&\big(\frac{K_1/k}{(\O_{1,6})}\big)&\big(\frac{K_1/k}
{(\O_{1,7})}\big) \cr
\Phi_{[\mathbf{r_0},\mathbf{s_0};1,1]}(z_{13})_5 & -1 & -1 & -1 & -1 & -1 & 1\cr
\Phi_{[\mathbf{r_0},\mathbf{s_0};1,2]}(z_{13})_5 & -4 & -2 & -4 & -2 & -4 & 2\cr
\Phi_{[\mathbf{r_0},\mathbf{s_0};1,3]}(z_{13})_5 & -10& -4 & -6 & -4 & -10& 8\cr
\Phi_{[\mathbf{r_0},\mathbf{s_0};1,4]}(z_{13})_5 & -19& -7 & -11& -11& -13& 11 \cr
\Phi_{[\mathbf{r_0},\mathbf{s_0};1,5]}(z_{13})_5 & -35&-13 & -13& -19& -15& 13 \cr
\Phi_{[\mathbf{r_0},\mathbf{s_0};1,6]}(z_{13})_5 & -60& -18& -8 & -32& -22& 20 \cr
}
\end{equation*}
is equal to $-2^7\cdot31$ which is prime to 5. 
Using Lemma \ref{primitive} and (\ref{formula2}) we can deduce (\ref{case3-2}).

\item[(iv)] In this case $\det(A_5(\mathbf{r_0},\mathbf{s_0}))=0$ so that we should find another generators of $K_{2\mu-1-\alpha}$ over $k_{2\mu-1-\alpha}$.
By \cite[p.316]{Komatsu}, $H_{2\mu-1-\alpha}/S_{2\mu-\alpha}$ is generated by real units of $k$ for any odd prime $p$.
Using the idea in the proof of Theorem \ref{dimension}, one can show that $(\V^*(S_{2\mu-1-\alpha})\cap H_{2\mu-1-\alpha})/S_{2\mu-\alpha} =\{0\}$, and so $\Phi_{[\mathbf{r_0},\mathbf{s_0};\mu,i]}(z_{5})_p^{p^\alpha}\in K_{2\mu-1-\alpha}$ for $1\leq i\leq 3$.
Note that $\Z_{p^{2\mu-\alpha}}\in\mathcal{F}_{2p^{2\mu-\alpha}}$ is $R_{2p^{2\mu-\alpha}}$-invariant, hence $\Z_{p^{2\mu-\alpha}}\in K_{2\mu-1-\alpha}$ by Proposition \ref{reciprocity}. 
Since $N_{k/\mathbb{Q}}(\O_{2\mu-1-\alpha,j})\equiv 1-2p^{2\mu-1-\alpha}\pmod{2p^{2\mu-\alpha}}$ for $1\leq j \leq 3$, we get
\begin{equation}\label{zeta}
\Z_{p^{2\mu-\alpha}}^{\big(\frac{K_{2\mu-1-\alpha}/k}{(\O_{2\mu-1-\alpha,j})}\big)}=\Z_p^{-2}\Z_{p^{2\mu-\alpha}}\quad\textrm{ for $1\leq j\leq 3$}.
\end{equation}
Now, observe that the determinant of the matrix
\begin{equation*}
\bbordermatrix{%
 & \big(\frac{K_1/k}{(\O_{1,1})}\big) & \big(\frac{K_1/k}{(\O_{1,2})}\big)&\big(\frac{K_1/k}{(\O_{1,3})}\big) \cr
\Z_{p^2} & -2 & -2 & -2 \cr
\Phi_{[\mathbf{r_0},\mathbf{s_0};1,1]}(z_5)_p & -1 & -1 &  1\cr
\Phi_{[\mathbf{r_0},\mathbf{s_0};1,3]}(z_5)_p & -4 & -6& 4  \cr
}
\end{equation*}
is equal to $-2^3$ which is prime to $p$.
Therefore, we obtain (\ref{case4}) by Lemma \ref{primitive}, (\ref{formula2}) and (\ref{zeta}).
\end{itemize}
\end{proof}
\begin{remark}
\begin{itemize}
\item[(i)] Especially when $\mu=1$, Corollary \ref{generator2} (iv) is reduced to Komatsu's work (\cite[Proposition 1]{Komatsu}) with a little different ingredients.
\item[(ii)]
The following table gives the prime factors of $\det(A_\ell(\mathbf{r_0},\mathbf{s_0}))$ for $\ell\leq 89 $.

\begin{tabular}{|c|p{13.0cm}|}
\hline
\multicolumn{1}{|c|}{$\ell$} & \multicolumn{1}{c|}{prime factors of $\det(A_\ell(\mathbf{r_0},\mathbf{s_0}))$} \\
\hline
3 & 2\\
5 & 0\\
7 & 2\\
11& 2, 3, 5\\
13& 2, 5\\
17& 2, 7, 17, 43\\
19& 2, 3, 36137\\
23& 2, 3, 11, 13, 29, 89, 241\\ 
29& 2, 3, 5, 13, 113, 58057291\\
31& 2, 3, 31, 109621, 1216387\\
37& 2, 5, 13, 37, 53, 109, 10138325056259\\
41& 2, 5, 11, 17, 41, 439, 1667, 166013, 203381\\
43& 2, 3, 19, 43, 211, 281345721890371109\\
47& 2, 5, 83, 139, 5323, 178481, 6167669171116393\\
53& 2, 3, 5, 139, 157, 1613, 4889, 1579367, 28153859844430949\\
59& 2, 3, 59, 233, 3033169, 1899468180409634452730252070517\\
61& 2, 5, 11, 13, 41, 1321, 1861, 1142941857599125232990619467569\\
67& 2, 3, 67, 683, 12739, 20857, 513881, 1858283767, 986862333655510350967\\
71& 2, 5, 7, 31, 79, 127, 1129, 79241, 122921, 68755411, 1190061671, 3087543529906501\\
73& 2, 7, 73, 79, 89, 16747, 134353, 5754557119657, 1150806776867233, 1190899  \\
79& 2, 5, 7, 13, 29, 53, 1427, 3847, 8191, 121369, 377911, 1842497, 51176893, 357204083, 32170088152177\\
83& 2, 3, 13, 17387, 279405653, 43059261982072584626787705301351, 8831418697, 758583423553 \\
89& 2, 17, 23, 89, 113, 313629821584641896139082338756559409, 4504769, 118401449, 22482210593 \\
\hline
\end{tabular}

\end{itemize}
\end{remark}

\bibliographystyle{amsplain}

\address{
Ja Kyung Koo\\
Department of Mathematical Sciences \\
KAIST \\
Daejeon 305-701 \\
Republic of Korea} {jkkoo@math.kaist.ac.kr}
\address{
Dong Sung Yoon\\
National Institute for Mathematical Sciences \\
Daejeon 305-811 \\
Republic of Korea} {dsyoon@nims.re.kr}

\end{document}